\newtheorem{theorem}{Theorem}[section]
\newtheorem{definition}[theorem]{Definition}
\newtheorem{lemma}[theorem]{Lemma}
\newtheorem{problem}[theorem]{Problem}
\newtheorem{proposition}[theorem]{Proposition}
\newtheorem{remark}[theorem]{Remark}
\newenvironment{proof}[1][Proof]{\noindent\textbf{#1.} }
{\hfill \ \rule{0.5em}{0.5em}}
\newcommand{\turanf}{\textup{ex}(n,F)}
\newcommand\blfootnote[1]{%
  \begingroup
  \renewcommand\thefootnote{}\footnote{#1}%
  \endgroup
}
\begin{document}

\title{\vspace{-1cm}Orthogonal polarity graphs and Sidon sets}
\author{Michael Tait\thanks{Department of
Mathematics, University of California, San Diego, CA, USA, mtait@math.ucsd.edu} \and Craig Timmons\thanks{Department of
Mathematics, University of California, San Diego, CA, USA, ctimmons@math.ucsd.edu} }

\maketitle
\blfootnote{\textsuperscript{1}Both authors were supported by NSF Grant DMS-1101489 through Jacques Verstra\" ete.}

\vspace{-1cm}

\begin{abstract}
Determining the maximum number of edges in an $n$-vertex $C_4$-free graph is a well-studied problem that dates back to a paper of Erd\H{o}s from 1938.  One of the most important families of $C_4$-free graphs are the
Erd\H{o}s-R\'{e}nyi orthogonal polarity graphs.  We show that the Cayley sum graph constructed using a Bose-Chowla Sidon set is isomorphic to a large induced subgraph of the Erd\H{o}s-R\'{e}nyi orthogonal polarity graph.  Using this isomorphism we prove that the Petersen graph is a subgraph of every sufficiently large Erd\H{o}s-R\'{e}nyi orthogonal polarity graph.  
\end{abstract}

\section{Introduction}

Let $\mathcal{F}$ be a family of graphs.  A graph $G$ is \emph{$\mathcal{F}$-free} if $G$ does not contain a subgraph that is isomorphic to a graph in $\mathcal{F}$.  The \emph{Tur\'{a}n number} of $\mathcal{F}$, denoted
$\textup{ex}(n , \mathcal{F} )$, is the maximum number of edges in an $n$-vertex graph that is $\mathcal{F}$-free.
When $\mathcal{F} = \{ F \}$, we write $\turanf$ instead of $\textup{ex}(n, \{ F \})$.
Determining Tur\'{a}n numbers for different families of graphs is one of the most studied problems in extremal graph theory.  A case of particular interest is when $\mathcal{F} =\{ C_4\}$.  A counting argument of
K\H{o}v\'{a}ri, S\'{o}s, and Tur\'{a}n \cite{kst} (see also Reiman \cite{reiman}) gives
\begin{equation}\label{eq:kst ub}
\textup{ex}( n , C_4) \leq \frac{1}{2} n^{3/2} + \frac{1}{2}n.
\end{equation}
In 1966, Brown \cite{b}, Erd\H{o}s, R\'{e}nyi, and S\'{o}s \cite{ers} independently constructed graphs that show (\ref{eq:kst ub}) is asymptotically best possible.  These graphs are often called Erd\H{o}s-R\'{e}nyi polarity graphs and are constructed using an orthogonal polarity of the projective plane $PG(2,q)$.  The construction is as follows.  Let $q$ be a prime power.
The \emph{Erd\H{o}s-R\'{e}nyi graph}, denoted $ER_q$, is the graph whose vertices are the points of
$PG(2, q)$.  Two distinct vertices $(x_0 , x_1 , x_2)$ and $(y_0 , y_1 , y_2)$ are adjacent if
$x_0 y_0 + x_1 y_1 + x_2 y_2 = 0$.  It is well known that these graphs have $q^2  + q  + 1$ vertices,
have $\frac{1}{2}q (q+1)^2$ edges, and are $C_4$-free.  Thus for any prime power $q$,
\begin{equation}\label{eq:er lb}
\textup{ex}(q^2 + q + 1 , C_4) \geq \frac{1}{2}q(q + 1)^2.
\end{equation}
F\"{u}redi \cite{fu1983, fu1996} proved that (\ref{eq:er lb}) is best possible and moreover, any
$C_4$-free graph with $q^2 + q + 1$ vertices and $\frac{1}{2}q (q  +1)^2$ edges is
an orthogonal polarity graph of a projective plane of order $q$,  provided $q \geq 15$.  
The graphs $ER_q$ play a central role in the Tur\'{a}n problem for $C_4$.  A difficult and related unsolved conjecture of McCuaig is that every $C_4$-free $n$-vertex graph with $\textup{ex}(n , C_4)$ edges is a subgraph of some polarity graph (see \cite{fu1994}).

Now we introduce another family of $C_4$-free graphs that have many edges.
Let $q$ be a prime power and let $\theta$ be a generator of the multiplicative group $\mathbb{F}_{q^2}^{*}$, the nonzero elements of the finite field $\mathbb{F}_{q^2}$.  The set
\[
A (q , \theta ) := \{ a \in \mathbb{Z}_{q^2 - 1} : \theta^{a} - \theta \in \mathbb{F}_{q} \}
\]
is a \emph{Bose-Chowla Sidon set}.  These sets, constructed in \cite{bc}, have $q$ elements and are Sidon sets in $\mathbb{Z}_{q^2-1}$. That is, they have the property that the sums
$a + b ( \textup{mod}~q^2 -1 )$ with $a,b \in A(q, \theta)$ are all distinct, up to an ordering of the terms.

\begin{definition}\label{def:Gq}
Let $q$ be a prime power and $\theta$ be a generator of $\mathbb{F}_{q^2}^*$.
The graph $G_{q, \theta }$ is the graph with vertex set $\mathbb{Z}_{q^2 - 1}$.  Two distinct vertices $x$ and $y$ are adjacent if and only if $x + y  \in A(q, \theta )$.
\end{definition}

A vertex in $G_{q, \theta}$ has degree $q$ or $q-1$ and a vertex of degree $q-1$ is called an \emph{absolute point}.
In \cite{tt} the graphs $G_{q , \theta}$ for odd $q$ were used to improve a lower bound of Abreu et al.\ \cite{abl} on the Tur\'{a}n number $\textup{ex}( q^2 - q - 2 , C_4)$, $q$ an odd prime power. The eigenvalues of $G_{q,\theta}$ were studied in \cite{chung} and in a more general setting in \cite{v}. In this paper we continue the study of the graphs
$G_{q, \theta}$.  Our main result shows that $G_{q, \theta}$ is an induced subgraph of $ER_q$.

\begin{theorem}\label{th:induced subgraph}
Let $q$ be a prime power and $\theta$ be a generator of $\mathbb{F}_{q^2}^*$.
The graph $G_{q, \theta}$ is isomorphic to an induced subgraph of the Erd\H{o}s-R\'{e}nyi graph $ER_q$.
\end{theorem}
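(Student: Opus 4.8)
The plan is to realize the vertex set $\mathbb{Z}_{q^2-1}$ of $G_{q,\theta}$ as the nonzero elements of $\mathbb{F}_{q^2}$ and to translate the Sidon-sum adjacency into an orthogonality condition for a symmetric bilinear form on $\mathbb{F}_q^3$. First I would identify $\mathbb{Z}_{q^2-1}$ with $\mathbb{F}_{q^2}^*$ via $x \mapsto \theta^x$, which is a bijection since $\theta$ generates $\mathbb{F}_{q^2}^*$. Because $\theta \notin \mathbb{F}_q$, the pair $\{1,\theta\}$ is an $\mathbb{F}_q$-basis of $\mathbb{F}_{q^2}$, so I may write $\theta^x = a_0 + a_1\theta$ with $a_0,a_1 \in \mathbb{F}_q$ determined by $x$. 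The key reformulation is that $x+y \in A(q,\theta)$ means exactly $\theta^{x+y} - \theta \in \mathbb{F}_q$, i.e.\ $\theta^x\theta^y \in \theta + \mathbb{F}_q$; equivalently, the coefficient of $\theta$ in the product $\theta^x\theta^y$ equals $1$.

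Next I would make this coefficient explicit. Writing $\theta^2 = t\theta + s$ with $s,t \in \mathbb{F}_q$ (the minimal polynomial of $\theta$ over $\mathbb{F}_q$) and expanding $(a_0+a_1\theta)(b_0+b_1\theta)$, the coefficient of $\theta$ is $a_0 b_1 + a_1 b_0 + t a_1 b_1$. Hence, for distinct $x,y$ with $\theta^x = a_0 + a_1\theta$ and $\theta^y = b_0 + b_1\theta$, the vertices $x$ and $y$ are adjacent in $G_{q,\theta}$ if and only if $a_0 b_1 + a_1 b_0 + t a_1 b_1 = 1$. This is an inhomogeneous but symmetric condition, so I would homogenize it with a third coordinate: set $\phi(x) = (a_0 : a_1 : 1) \in PG(2,q)$ and let $B$ be the symmetric bilinear form on $\mathbb{F}_q^3$ with matrix
\[ M = \begin{pmatrix} 0 & 1 & 0 \\ 1 & t & 0 \\ 0 & 0 & -1 \end{pmatrix}. \]
A direct expansion gives $B(\phi(x),\phi(y)) = a_0 b_1 + a_1 b_0 + t a_1 b_1 - 1$, so $B(\phi(x),\phi(y)) = 0$ precisely when $x$ and $y$ are adjacent in $G_{q,\theta}$.

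It remains to package this as an induced-subgraph statement. Since $\det M = 1 \neq 0$, the form $B$ is nondegenerate and defines an orthogonal polarity graph $\Gamma_B$ on the points of $PG(2,q)$. The map $\phi$ is injective, because distinct $x$ give distinct $\theta^x$ and hence distinct coordinate pairs $(a_0,a_1) \neq (0,0)$; its image is exactly the set of affine points $(a_0 : a_1 : 1)$ other than the origin, a set of $q^2-1$ points. As $\phi$ intertwines adjacency in $G_{q,\theta}$ with $B$-orthogonality for distinct vertices, $G_{q,\theta}$ is isomorphic to the subgraph of $\Gamma_B$ induced on $\phi(\mathbb{Z}_{q^2-1})$.

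The main obstacle is the final identification $\Gamma_B \cong ER_q$, since $ER_q$ is defined by the standard form $\sum x_i y_i$ rather than by $B$. I would resolve this by showing that $M$ is congruent to the identity over $\mathbb{F}_q$: for $q$ odd this holds because both are nondegenerate symmetric forms of dimension three with square discriminant ($\det M = 1$), and for $q$ even because $M$ is a nondegenerate non-alternating symmetric form (its $(3,3)$-entry is nonzero) and squaring is a bijection, so every such form diagonalizes to the identity. A change of basis $T$ realizing $T^{\top} M T = I$ then induces a collineation of $PG(2,q)$ carrying $\Gamma_B$ isomorphically onto $ER_q$, and composing isomorphisms exhibits $G_{q,\theta}$ as an induced subgraph of $ER_q$. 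The genuinely delicate point is exactly this congruence computation uniformly in the characteristic; as a reassuring consistency check, for each image vertex precisely one adjacent point at infinity is discarded in passing to the induced subgraph, matching the stated degrees $q$ and $q-1$ of $G_{q,\theta}$.
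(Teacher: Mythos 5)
Your proposal is correct, and it takes a genuinely different route from the paper's. You send each vertex $x$ directly to the affine point of $PG(2,q)$ whose first two coordinates are the coordinates of $\theta^x$ in the basis $\{1,\theta\}$, observe that the Sidon adjacency $x+y\in A(q,\theta)$ is exactly the vanishing of an explicit nondegenerate symmetric bilinear form $B$ (your coefficient computation and the identity $\det M=1$ both check out), and then identify $B$ with the standard form via the classification of symmetric bilinear forms over $\mathbb{F}_q$. The paper goes in the opposite direction: it adjoins $q+2$ new vertices and edges to $G_{q,\theta}$ in three steps, verifies at each step that no $C_4$ is created, counts edges so as to invoke F\"uredi's characterization of extremal $C_4$-free graphs, and then writes down an explicit isomorphism onto $ER_q$ (with separate formulas for odd and even $q$) checked neighborhood by neighborhood. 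Your argument is shorter, uniform in the characteristic, and avoids F\"uredi's theorem entirely; it also identifies the $q+2$ missing points conceptually, as the line at infinity together with the pole $(0:0:1)$, and your degree bookkeeping (exactly one neighbor at infinity per image vertex, since $P^{\perp}$ never passes through $(0:0:1)$ and never equals the line at infinity) confirms this. What the paper's longer construction buys is the explicit auxiliary vertices $z_0,\dots,z_q,y$ and their adjacencies, which it reuses in the proof of Theorem \ref{th:petersen in erq}. The one point you flag as delicate, the congruence $T^{\top}MT=I$, is genuine but closes cleanly here: for odd $q$ the square discriminant $\det M=1$ suffices, as you say; for even $q$ note first that $t\neq 0$ (if $\theta^2\in\mathbb{F}_q$ then $\theta^{2(q-1)}=1$, contradicting that $\theta$ has order $q^2-1$), so $e_3$, $e_2$, $(t,1,0)$ is an orthogonal basis for $B$ with all self-pairings nonzero, and each is normalized to $1$ because squaring is a bijection on $\mathbb{F}_q$. (The paper quietly relies on the same congruence when it replaces $\sum x_iy_i$ by the form $x_0y_2+x_2y_0-x_1y_1$, citing \cite{mw}.)
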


In the course of proving Theorem \ref{th:induced subgraph} we explicitly describe how one can obtain $ER_q$ from
$G_{q , \theta}$.  This gives a new method in which one can construct $ER_q$. 

The vertices at distance two from the absolute points in $ER_5$ induce a graph that is isomorphic to the Petersen graph \cite{coxeter}.  
As an application of Theorem \ref{th:induced subgraph}, we prove that the Petersen graph is contained in every $ER_q$ provided $q \geq 3$.  

\begin{theorem}\label{th:petersen in erq}
For any prime power $q \geq 3$, the Petersen graph is a subgraph of $ER_q$.
\end{theorem}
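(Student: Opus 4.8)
The plan is to use Theorem~\ref{th:induced subgraph} to move the problem into the Cayley sum graph $G_{q,\theta}$, where adjacency has a transparent algebraic form, and then to exhibit an explicit copy of the Petersen graph there. Since $G_{q,\theta}$ is isomorphic to an \emph{induced} subgraph of $ER_q$, any subgraph of $G_{q,\theta}$ is automatically a subgraph of $ER_q$, and as $G_{q,\theta}$ has $q^2-1$ vertices this reduction is available once $q^2-1\ge 10$, that is for $q\ge 4$. The lone remaining case $q=3$, where $G_{3,\theta}$ has too few vertices, will be handled by displaying the Petersen graph directly inside the $13$-vertex graph $ER_3$. The first step is to record a multiplicative description of $G_{q,\theta}$: the map $x\mapsto\theta^{x}$ is a bijection from $\mathbb{Z}_{q^2-1}$ onto $\mathbb{F}_{q^2}^{*}$, and because $a\in A(q,\theta)$ precisely when $\theta^{a}\in\theta+\mathbb{F}_q$, the image $\{\theta^{a}:a\in A(q,\theta)\}$ is exactly the affine line $\theta+\mathbb{F}_q$ (which misses $0$). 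Hence $G_{q,\theta}$ is isomorphic to the graph on $\mathbb{F}_{q^2}^{*}$ in which $u\sim v$ if and only if $uv\in\theta+\mathbb{F}_q$.

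In this model I would realize the Petersen graph through its standard picture: an outer $5$-cycle $u_0u_1u_2u_3u_4$, an inner pentagram $w_0w_2w_4w_1w_3$, and the five spokes $u_iw_i$. Thus it suffices to find ten distinct nonzero elements $u_0,\dots,u_4,w_0,\dots,w_4\in\mathbb{F}_{q^2}$ with $u_iu_{i+1}\in\theta+\mathbb{F}_q$, $w_iw_{i+2}\in\theta+\mathbb{F}_q$, and $u_iw_i\in\theta+\mathbb{F}_q$ for all $i$ (indices read modulo $5$); since the Petersen graph is itself $C_4$-free there is no immediate obstruction to such an embedding, and we never need to control non-edges. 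The essential difficulty is that this model is extremely rigid: a short computation shows that two vertices $u,v$ with $u/v\notin\mathbb{F}_q$ have at most one common neighbour, which is exactly why $G_{q,\theta}$ is $C_4$-free. Consequently requiring a vertex to be adjacent to three previously chosen vertices is generally impossible, so the fifteen edge conditions behave like an over-determined system and cannot be met by placing vertices one at a time.

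To get around the rigidity I would solve the system in stages. First build the outer cycle by prescribing $u_iu_{i+1}=\theta+c_i$ for free parameters $c_0,\dots,c_4\in\mathbb{F}_q$; then each $u_i$ is determined by $u_0$ and the $c_i$, and closing the cycle forces
\[
u_0^{2}=\frac{(\theta+c_0)(\theta+c_2)(\theta+c_4)}{(\theta+c_1)(\theta+c_3)},
\]
which is solvable whenever the right-hand side is a nonzero square in $\mathbb{F}_{q^2}^{*}$, a condition readily arranged by the choice of the $c_i$ (and automatic when $q$ is even). Next attach the spokes by setting $u_iw_i=\theta+e_i$, that is $w_i=(\theta+e_i)/u_i$, introducing new free parameters $e_0,\dots,e_4\in\mathbb{F}_q$. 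After these choices only the five pentagram conditions $w_iw_{i+2}\in\theta+\mathbb{F}_q$ remain, and substituting the formulas for the $w_i$ turns them into five equations in the parameters $c_i,e_i$ over $\mathbb{F}_q$. Since there are more free parameters than constraints, these equations cut out an affine variety of positive dimension, and the goal is to produce an $\mathbb{F}_q$-point of it at which the ten resulting vertices are pairwise distinct and nonzero.

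The main obstacle is exactly this final step. One route is a sufficiently symmetric, closed-form choice of the parameters that satisfies the pentagram equations identically and yields distinct vertices for all large $q$; the other is a point-counting argument, showing via Lang--Weil that the number of admissible parameter tuples grows like a fixed positive power of $q$ while the degenerate tuples (those forcing a coincidence among the ten vertices, or a vertex equal to $0$) lie on a subvariety of strictly smaller dimension. The delicate point in the second route is verifying that the relevant variety has an absolutely irreducible component defined over $\mathbb{F}_q$, so that Lang--Weil applies; a clean explicit construction avoids this issue entirely. Either way the argument is uniform for all sufficiently large $q$, and the finitely many small prime powers below the threshold, together with $q=3$ (which is checked inside $ER_3$), are disposed of by a direct finite verification.
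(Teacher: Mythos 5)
Your reduction is sound as far as it goes: since $G_{q,\theta}$ is an induced subgraph of $ER_q$, any copy of the Petersen graph inside $G_{q,\theta}$ would indeed live in $ER_q$, and your multiplicative model ($u\sim v$ iff $uv\in\theta+\mathbb{F}_q$) is correct. But the proof has a genuine gap exactly where you flag ``the main obstacle'': the five pentagram conditions are never solved. Neither of your two proposed routes is carried out --- no explicit symmetric choice of the parameters $c_i,e_i$ is exhibited, and the Lang--Weil route would require verifying that the variety cut out by the five conditions (each being one $\mathbb{F}_q$-equation, namely that the $\theta$-coefficient of $w_iw_{i+2}$ equals $1$) has an absolutely irreducible component over $\mathbb{F}_q$ of positive dimension not contained in the degeneracy locus, none of which is checked. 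Even granting that, you would obtain the statement only for $q$ above an unspecified threshold, and the ``direct finite verification'' of the remaining cases is nontrivial and not done; it is not even clear a priori that the Petersen graph embeds in $G_{q,\theta}$ at all for small $q$ (for $q=4$ the graph has only $15$ vertices and $28$ edges). The $q=3$ case inside $ER_3$ is likewise asserted rather than checked. As written, this is a plan rather than a proof.

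The paper sidesteps all of this by using a different decomposition of the Petersen graph and by \emph{not} confining the embedding to $G_{q,\theta}$. It views the Petersen graph as a $6$-cycle together with three vertices attached to the three antipodal pairs of the hexagon and a tenth vertex joined to those three. The $6$-cycle $u_1w_1u_2v_1w_2v_2$ is produced inside $G_{q,\theta}$ by a short Sidon-set computation (choose $i+j=k$ and $s+t\equiv -(m_i+m_j-m_k)\pmod{q-1}$, so that $a_i+a_j+(s+t)(q+1)\in A$), and the remaining four vertices are taken from the $q+2$ extra vertices $z_0,\dots,z_q,y$ constructed in the proof of Theorem~\ref{th:induced subgraph}: each $z_\ell$ is adjacent to the entire coset $H+\ell$ and $y$ is adjacent to every $z_\ell$, so all the additional adjacencies hold by construction and no system of equations arises. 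If you want to rescue your approach, either produce the explicit parameter values for your pentagon--pentagram model, or switch to this hexagon-plus-claw decomposition, which converts the over-determined part of your system into edges that are free.
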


In \cite{tt} it is shown that $G_{q, \theta}$ has $\frac{1}{2}q^3 - q^2 - O( q^{3/4})$ edges when $q$ is an odd prime power.  
Using some elementary properties of $A(q, \theta)$, we can determine the number of edges of $G_{q, \theta}$ exactly.  

\begin{proposition}\label{cor:odd q construction}
If $q$ is an odd prime power and $\theta$ is a generator of $\mathbb{F}_{q^2}^*$, then the graph $G_{q, \theta}$ is a $C_4$-free graph with $q^2 - 1$ vertices, and $\frac{1}{2}q^3 - q + \frac{1}{2}$ edges.
\end{proposition}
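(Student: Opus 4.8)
The plan is to treat the three claims separately, with the edge count being the only substantial one. The vertex count is immediate from Definition~\ref{def:Gq}, as $V(G_{q,\theta}) = \mathbb{Z}_{q^2-1}$ has exactly $q^2-1$ elements. For $C_4$-freeness I would invoke Theorem~\ref{th:induced subgraph}: $G_{q,\theta}$ is isomorphic to an induced subgraph of $ER_q$, which is $C_4$-free, and every subgraph of a $C_4$-free graph is itself $C_4$-free. Alternatively, this follows directly from the Sidon property: if distinct vertices $x,z$ had two common neighbours $y,w$, then $x+y,\,z+y,\,x+w,\,z+w \in A(q,\theta)$ satisfy $(x+y)+(z+w)=(x+w)+(z+y)$, so the Sidon property gives $\{x+y,z+w\}=\{x+w,z+y\}$, and each resulting case forces $y=w$ or $x=z$, a contradiction.

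For the edge count I would first determine the degrees. The neighbours of a vertex $x$ are precisely the elements $a-x$ with $a \in A(q,\theta)$; distinct values of $a$ give distinct candidates, and the only candidate to discard is $a-x=x$, that is $a=2x$. Hence $\deg(x)=q$ unless $2x \in A(q,\theta)$, in which case $\deg(x)=q-1$ and $x$ is an absolute point. Writing $N$ for the number of absolute points, summation of degrees yields
\[
e(G_{q,\theta}) = \tfrac{1}{2}\bigl(q(q^2-1)-N\bigr),
\]
so it remains to show $N=q-1$, which then gives $\tfrac12(q^3-2q+1)=\tfrac12 q^3-q+\tfrac12$.

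Computing $N$ is the crux, and it is here that the oddness of $q$ is used. Because $q^2-1$ is even, the map $x \mapsto 2x$ on $\mathbb{Z}_{q^2-1}$ is two-to-one onto the even residues, so $N = 2\,\lvert\{a \in A(q,\theta): a \text{ even}\}\rvert$. Next, $\theta^a$ is a square in $\mathbb{F}_{q^2}^*$ exactly when $a$ is even, and $a \mapsto c := \theta^a-\theta$ is a bijection from $A(q,\theta)$ onto $\mathbb{F}_q$ (each $\theta+c$ is nonzero since $\theta \notin \mathbb{F}_q$). Using the norm map $\mathrm{Nm}\colon \mathbb{F}_{q^2}^*\to\mathbb{F}_q^*$, $\mathrm{Nm}(x)=x^{q+1}$, an element is a square in $\mathbb{F}_{q^2}^*$ iff its norm is a square in $\mathbb{F}_q^*$ (again using that $q+1$ is even). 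Since
\[
\mathrm{Nm}(\theta+c) = c^2 + \mathrm{Tr}(\theta)\,c + \mathrm{Nm}(\theta),
\]
an irreducible quadratic $f(c)$ over $\mathbb{F}_q$ with no root in $\mathbb{F}_q$, counting even $a$ amounts to counting $c \in \mathbb{F}_q$ with $f(c)$ a nonzero square.

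The last step is a standard character-sum estimate. With $\chi$ the quadratic character of $\mathbb{F}_q$, the number of $c$ with $f(c)$ a nonzero square is $\tfrac12\bigl(q+\sum_{c\in\mathbb{F}_q}\chi(f(c))\bigr)$, and the classical identity $\sum_{c}\chi(c^2+tc+n)=-1$ for a quadratic of nonzero discriminant gives the value $\tfrac{q-1}{2}$. Therefore $N = 2\cdot\tfrac{q-1}{2}=q-1$, finishing the edge count. I expect the determination of $N$ to be the main obstacle: translating the parity of the exponent into a square condition, then into the norm and quadratic-character language, and verifying that $f$ is irreducible so that no value $f(c)$ vanishes and the count is exactly $\tfrac{q-1}{2}$.
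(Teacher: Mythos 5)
Your proposal is correct, but the crux --- showing that the number of absolute points is $q-1$ --- is handled by a genuinely different argument than the paper's. The paper uses Lemma~\ref{new lemma 1} (a consequence of $A-A$ avoiding the nonzero multiples of $q+1$) to write $A=\{i+m_i(q+1): 1\le i\le q\}$; since $q+1$ is even, $a_i$ is even exactly when $i$ is, so $A$ has $\tfrac{q-1}{2}$ even elements and hence $q-1$ absolute points, with no field arithmetic needed. You instead translate ``even exponent'' into ``square in $\mathbb{F}_{q^2}^*$'', pass through the norm map to the irreducible quadratic $\mathrm{Nm}(\theta+c)=c^2+\mathrm{Tr}(\theta)c+\mathrm{Nm}(\theta)$, and finish with the classical evaluation $\sum_c\chi(c^2+tc+n)=-1$ for nonzero discriminant. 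Both routes are sound and both exploit the parity of $q+1$; the paper's is shorter once the residue-class structure of $A$ modulo $q+1$ is in hand, while yours is self-contained and makes the connection to quadratic residues explicit (your parenthetical reason for ``square in $\mathbb{F}_{q^2}^*$ iff norm is a square'' should really cite that $(q^2-1)/2=(q+1)\cdot\tfrac{q-1}{2}$ with $q$ odd, a minor slip). One genuine caution: your first suggestion for $C_4$-freeness, invoking Theorem~\ref{th:induced subgraph}, would be circular --- the proof of that theorem uses the edge count of this very proposition to identify $G_3$ as an orthogonal polarity graph via F\"{u}redi's result. Your fallback Sidon-property argument is the correct one and is what the paper implicitly relies on when it states the graphs are known to be $C_4$-free.
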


A natural question is if the graphs of Corollary \ref{cor:odd q construction} are extremal.  Using $ER_q$, we can show that this is not the case when $q$ is a square prime power.

\begin{proposition}\label{prop:odd q again}
Let $p$ be a prime and $t \geq 1$ be an integer.  If $q = p^{2t}$, then
\[
\textup{ex}( q^2 - 1 , C_4) \geq \frac{1}{2}q^3 + \frac{1}{2} q^{3/2} - \frac{3}{2} q + \frac{1}{2} q^{1/2} - 2.
\]
\end{proposition}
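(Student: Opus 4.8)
The plan is to realize the desired graph as an induced subgraph of $ER_q$ on $q^2-1$ vertices, so that $C_4$-freeness is inherited for free and only the number of edges has to be controlled. Recall that $ER_q$ has $q^2+q+1$ vertices and $\frac12 q(q+1)^2$ edges, that its absolute points (the points on the conic $x_0^2+x_1^2+x_2^2=0$) have degree $q$, and that every other vertex has degree $q+1$. If $S$ is any set of $q+2$ vertices containing exactly $a$ absolute points, then $\sum_{v\in S}\deg(v)=(q+2)(q+1)-a$, and deleting $S$ produces an induced subgraph $H=ER_q[V\setminus S]$ on $q^2-1$ vertices with
\[
e(H)=e(ER_q)-\Big(\sum_{v\in S}\deg(v)-e(S)\Big)=e(ER_q)-(q+2)(q+1)+a+e(S).
\]
A short computation shows that the quantity in the statement equals exactly $e(ER_q)-(q+2)(q+1)+e(ER_{\sqrt q})$. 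Hence the entire problem reduces to producing a set $S$ of $q+2$ vertices of $ER_q$ for which
\[
a+e(S)=e(ER_{\sqrt q})=\tfrac12\sqrt q\,(\sqrt q+1)^2 .
\]

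To build such an $S$ I would exploit the hypothesis that $q=p^{2t}$ is a square, so that $\mathbb{F}_{\sqrt q}$ is a subfield of $\mathbb{F}_q$ and the points of $PG(2,q)$ with coordinates in $\mathbb{F}_{\sqrt q}$ form a Baer subplane $B\cong PG(2,\sqrt q)$. Since the orthogonal polarity of $PG(2,q)$ restricts to the orthogonal polarity of this subplane, the induced subgraph $ER_q[B]$ is a copy of $ER_{\sqrt q}$; in particular $e(ER_q[B])=e(ER_{\sqrt q})$, the $\sqrt q+1$ absolute points of $ER_{\sqrt q}$ remain absolute in $ER_q$, and each of the $q-\sqrt q$ absolute points lying off $B$ meets $B$ (through its tangent line) in a full Baer subline of $\sqrt q+1$ points, hence contributes $\sqrt q+1$ edges to $e(S)$ together with one unit to $a$. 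The subplane is the source of the extra $\tfrac12 q^{3/2}$ in the bound, while the degree deficiency of the absolute points is what produces the favorable $-\tfrac32 q$ term; the set $S$ should be assembled from these two ingredients, starting from $B$ and exchanging some of its vertices for absolute points of $ER_q$ off $B$ so that both the size $q+2$ and the target value of $a+e(S)$ are met.

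The main obstacle is exactly this bookkeeping. One must pin down the $q+2$ vertices of $S$ so that $a+e(S)$ lands on the precise value $e(ER_{\sqrt q})$, which forces one to track simultaneously how many edges of $ER_{\sqrt q}$ survive as vertices of $B$ are removed and how the off-$B$ absolute points attach to $B$ through their polars. I expect the cleanest route is to fix $S$ once and for all as an explicit incidence configuration defined over $\mathbb{F}_{\sqrt q}$, and then to compute $a$ and $e(S)$ by counting absolute points and solutions of $x_0y_0+x_1y_1+x_2y_2=0$ over the subfield, rather than by any greedy optimization. Once $S$ is fixed and the identity $a+e(S)=e(ER_{\sqrt q})$ is verified by this finite‑field count, the displayed formula for $e(H)$ yields the stated lower bound on $\textup{ex}(q^2-1,C_4)$ at once.
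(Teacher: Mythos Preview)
Your overall strategy---realize $ER_{\sqrt q}$ as the Baer subplane $B$ inside $ER_q$ and delete a set $S$ of $q+2$ vertices built from $B$---is exactly the paper's. The paper's execution, however, is far more direct than the swapping scheme you outline: it applies Brooks' Theorem to $H\cong ER_{\sqrt q}$ (maximum degree $\sqrt q+1$, so $\alpha(H)\ge n(H)/\Delta(H)>\sqrt q-1$), picks an independent set $I\subset V(H)$ with $|I|=\sqrt q-1$, sets $S=V(H)\setminus I$, and then simply writes
\[
e(ER_q\setminus S)\ \ge\ \tfrac12 q(q+1)^2-(q+2)(q+1)+\tfrac12\sqrt q\,(\sqrt q+1)^2,
\]
which rearranges to the stated lower bound. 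There is no exchange with off-$B$ absolute points, no tracking of polars, and no attempt to hit your target $a+e(S)=e(ER_{\sqrt q})$ on the nose.

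That said, your instinct that the bookkeeping is the genuine obstacle is well placed. The paper asserts $e(S)=\tfrac12\sqrt q(\sqrt q+1)^2=e(ER_{\sqrt q})$, but deleting an independent set $I$ of size $\sqrt q-1$ from $H$ also kills the $\Theta(q)$ edges between $I$ and $H\setminus I$, so in fact $e(S)<e(ER_{\sqrt q})$; plugging this into your exact formula $e(ER_q\setminus S)=e(ER_q)-(q+2)(q+1)+a+e(S)$ shows the displayed inequality over-counts by a term of order $q$. In other words, the paper glosses over precisely the gap you identified. Your proposed remedy of swapping in absolute points of $ER_q$ lying off $B$ is not what the paper does, and as written it rests on an incidence claim that needs proof (a line of $PG(2,q)$ meets a Baer subplane in either $1$ or $\sqrt q+1$ points, and there is no automatic reason the tangent at an off-$B$ conic point falls in the latter case). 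So you and the paper take the same route; the paper arrives via a shortcut that is not fully justified as stated, and your more careful plan would still require the additional incidence argument you flagged.
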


When $q$ is a power of 2, it is easy to determine the number of edges of $G_{q, \theta}$.
In this case, $G_{q, \theta}$ has exactly $q$ absolute points.  Furthermore, there is a unique vertex $x$ whose neighborhood is precisely the set of absolute points of $G_{q , \theta}$ (see Section 3).  Let $H_{q, \theta}$ be the subgraph of $G_{q, \theta}$ obtained by removing this vertex $x$ and its neighbors which are the vertices of degree $q-1$.
The graph $H_{q, \theta}$ implies
$\textup{ex}(q^2 - q -2 , C_4) \geq \frac{1}{2}q^3 -  q^2$ , $q$ a power of 2.  This lower bound was first obtained by
Abreu, Balbuena, and Labbate \cite{abl} using a different method.

\begin{proposition}\label{prop:power of 2}
Let $q$ be a power of 2 and $\theta$ be a generator of $\mathbb{F}_{q^2}^*$.  The
graph $G_{q, \theta}$ has $\frac{1}{2}q^3 - q$ edges, and contains an induced subgraph
$H_{q, \theta}$ that has $q^2 - q - 2$ vertices, and has
$\frac{1}{2}q^3 - q^2$ edges.
\end{proposition}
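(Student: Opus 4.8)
The plan is to first pin down the absolute points explicitly, then locate the distinguished vertex $x$, and finally count edges by an inclusion--exclusion argument.

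First I would determine the absolute points. A vertex $v$ is adjacent to $a-v$ for each $a\in A(q,\theta)$, and these $q$ values are distinct; the only coincidence that can drop the degree below $q$ is $a-v=v$, i.e.\ $2v\in A(q,\theta)$. Thus $v$ is absolute if and only if $2v\in A(q,\theta)$. Since $q$ is a power of $2$, the modulus $q^2-1$ is odd, so $2$ is invertible in $\mathbb{Z}_{q^2-1}$, the map $v\mapsto 2v$ is a bijection, and the set $S$ of absolute points is exactly $S=2^{-1}A(q,\theta)$, of size $q$. Counting degrees then gives the first assertion: there are $q$ vertices of degree $q-1$ and $q^2-q-1$ of degree $q$, so $2|E(G_{q,\theta})|=q(q-1)+(q^2-q-1)q=q^3-2q$, i.e.\ $|E(G_{q,\theta})|=\tfrac12 q^3-q$.

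Next I would produce the vertex $x$ with $N(x)=S$. Any such $x$ has degree $q$ (as $|S|=q$), hence is non-absolute and $N(x)=A(q,\theta)-x$; so the requirement $N(x)=S=2^{-1}A(q,\theta)$ is equivalent to $2A(q,\theta)=A(q,\theta)+2x$, that is, to $2A(q,\theta)$ being a translate of $A(q,\theta)$. This is the crux of the argument and the step that genuinely uses characteristic $2$. Passing to $\mathbb{F}_{q^2}^*$ via $a\mapsto\theta^a$, the set $A(q,\theta)$ corresponds to the coset $\theta+\mathbb{F}_q$, and $2A(q,\theta)$ corresponds to $(\theta+\mathbb{F}_q)^2=\theta^2+\mathbb{F}_q$, because in characteristic $2$ squaring is additive and permutes $\mathbb{F}_q$. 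A multiplicative translate $\theta^c(\theta+\mathbb{F}_q)$ equals $\theta^2+\mathbb{F}_q$ exactly when $\theta^c\in\mathbb{F}_q^*$ and $\theta^c\theta+\theta^2\in\mathbb{F}_q$; writing the minimal polynomial of $\theta$ over $\mathbb{F}_q$ as $X^2+pX+r$, the relation $\theta^2=p\theta+r$ forces $\theta^c=p=\mathrm{Tr}_{\mathbb{F}_{q^2}/\mathbb{F}_q}(\theta)$, which lies in $\mathbb{F}_q^*$ since $\theta\notin\mathbb{F}_q$. This determines $c$ uniquely, hence a unique $x=2^{-1}c$ with $N(x)=S$; one checks $c\notin A(q,\theta)$, so $x$ is indeed non-absolute.

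Finally I would count the edges of $H_{q,\theta}$, obtained by deleting $T:=\{x\}\cup S$. Since $x\notin S$ we have $|T|=q+1$, so $H_{q,\theta}$ has $q^2-q-2$ vertices. The key observation is that $S$ is independent: two distinct absolute points $a/2$ and $b/2$ (with $a,b\in A(q,\theta)$) are adjacent only if $a+b=2c=c+c$ for some $c\in A(q,\theta)$, which the Sidon property rules out unless $a=b$. Hence the only edges inside $T$ are the $q$ edges joining $x$ to $S$, and by inclusion--exclusion the number of edges meeting $T$ is $\sum_{v\in T}\deg(v)-|E(G_{q,\theta}[T])|=(q+q(q-1))-q=q^2-q$. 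Subtracting from $|E(G_{q,\theta})|$ yields $|E(H_{q,\theta})|=(\tfrac12 q^3-q)-(q^2-q)=\tfrac12 q^3-q^2$, as claimed. The main obstacle is the middle step---verifying that $2A(q,\theta)$ is a translate of $A(q,\theta)$---since this is exactly where the power-of-$2$ hypothesis is indispensable; the degree count and the inclusion--exclusion are routine once $S$ and $x$ are understood.
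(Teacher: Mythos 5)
Your proof is correct, and the overall architecture (locate the absolute points, find the distinguished vertex $x$ adjacent to all of them, then count degrees) matches the paper's. The one substantive difference is how you establish the existence of $x$, which is the real content of the proposition. The paper's Lemma on the absolute neighborhood reduces the claim $x+2^{-1}\cdot A = A$ to a cited theorem of Lindstr\"{o}m, namely that $A(q,\theta)-c$ equals the set $B(q,\theta)=\{b: \theta^{bq}+\theta^q=1\}$, which is closed under multiplication by the characteristic $p$. You instead prove directly that $2\cdot A$ is a translate of $A$ by transporting everything to $\mathbb{F}_{q^2}^*$: the image of $A$ is the additive coset $\theta+\mathbb{F}_q$, squaring (the Frobenius) carries it to $\theta^2+\mathbb{F}_q$, and matching this against a multiplicative translate $\theta^c(\theta+\mathbb{F}_q)$ forces $\theta^c=\mathrm{Tr}(\theta)\neq 0$. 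This is a self-contained and arguably more transparent argument; it isolates exactly where characteristic $2$ enters, and in fact recovers the same $x$ as the paper, since the paper's $x=c-2^{2\alpha-1}c$ equals your $2^{-1}c$ modulo $q^2-1$. You also explicitly verify that the set of absolute points is independent (via the Sidon property) and that $x$ is not itself absolute; the paper's edge count $e(G_{q,\theta})-q-q(q-2)$ uses both facts implicitly, so making them explicit is a small improvement in rigor rather than a deviation. Your final inclusion--exclusion gives the same counts, $q^2-q-2$ vertices and $\tfrac12 q^3-q^2$ edges.
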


It is still an open problem to determine if the lower bound
\[
\textup{ex}(q^2  - q - 2 , C_4) \geq \frac{1}{2}q^3 - q^2, ~ q ~\mbox{a power of 2}
\]
is best possible.  It has been conjectured \cite{abl} that it is indeed optimal.
Another result that we prove is that $G_{q, \theta}$ has diameter 3.  In fact, we will prove something stronger.
Given a graph $F$, we say that a graph $G$ is \emph{$F$-saturated} if $G$ is $F$-free and adding any edge to $G$ creates a copy of $F$.

\begin{theorem}\label{th:saturation}
If $q$ is a prime power and $\theta$ is a generator of $\mathbb{F}_{q^2}^*$, then the graph
$G_{q, \theta }$ is $C_4$-saturated.  When $q$ is a power of 2 and $q>4$, the graph $H_{q, \theta}$ is
$C_4$-saturated.
\end{theorem}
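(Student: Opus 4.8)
The plan is to handle $C_4$-freeness immediately and then reduce saturation to a solvability question over $\mathbb{F}_q$. Theorem~\ref{th:induced subgraph} realizes $G_{q,\theta}$ as an induced subgraph of the $C_4$-free graph $ER_q$, so $G_{q,\theta}$, and hence its induced subgraph $H_{q,\theta}$, is $C_4$-free; only the saturation remains. Since the graph is already $C_4$-free, inserting a non-edge $uv$ creates a $C_4$ if and only if it completes a path $u-a-b-v$ on four distinct vertices into a $4$-cycle. Setting $\alpha=u+a$, $\beta=a+b$, $\gamma=b+v$, such a path exists exactly when there are $\alpha,\beta,\gamma\in A(q,\theta)$ with $\alpha-\beta+\gamma=u+v$, in which case $a=\alpha-u$ and $b=\gamma-v$. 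Thus the whole theorem reduces to solving $\alpha-\beta+\gamma=u+v$ in $A(q,\theta)$ for every non-edge $uv$, with $a,b,u,v$ distinct (and, for $H_{q,\theta}$, with $a,b$ lying in $H_{q,\theta}$).

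To produce solutions I would pass to multiplicative coordinates, identifying a vertex $z\in\mathbb{Z}_{q^2-1}$ with $\theta^z\in\mathbb{F}_{q^2}^*$ and writing $S=\theta+\mathbb{F}_q$, so that adjacency $x+y\in A(q,\theta)$ becomes $\theta^x\theta^y\in S$. A path $u-a-b-v$ then corresponds to $c_1,c_2,c_3\in\mathbb{F}_q$ with $\theta^u\theta^a=\theta+c_1$, $\theta^a\theta^b=\theta+c_2$, and $\theta^b\theta^v=\theta+c_3$; eliminating $\theta^a$ and $\theta^b$ collapses these to the single equation
\[
(\theta+c_1)(\theta+c_3)=(\theta+c_2)\,\theta^{u+v}.
\]
Expanding in the $\mathbb{F}_q$-basis $\{1,\theta\}$ (using $\theta^2=p\theta+r$ with $p,r\in\mathbb{F}_q$) and eliminating $c_2$ turns this into a single conic in $(c_1,c_3)$ of the shape $(z_1c_1-z_0)(z_1c_3-z_0)=M$, where $\theta^{u+v}=z_1\theta+z_0$. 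This conic has at least $q-1$ points over $\mathbb{F}_q$, each of which yields a triple $(\alpha,\beta,\gamma)$; hence every pair $u,v$ is joined by at least $q-1$ walks of length three.

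It remains to select a walk that is a genuine path. Two coincidences are automatic: $a=v$ or $b=u$ would force $u+v=\alpha$ or $u+v=\gamma$ to lie in $A(q,\theta)$, which is false because $uv$ is a non-edge. The remaining coincidences $a=u$ (i.e.\ $\alpha=2u$), $b=v$ (i.e.\ $\gamma=2v$), and $a=b$ each exclude only a bounded number of points of the conic, so once $q$ exceeds a small absolute constant a genuine path survives; the finitely many remaining small $q$ are checked by hand. This gives the $C_4$-saturation of $G_{q,\theta}$.

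For $H_{q,\theta}$ with $q$ a power of $2$ and $q>4$ the same walk is sought, but $a,b$ must now avoid the deleted set $D=\{x\}\cup P$, where $P$ is the set of $q$ absolute points and $N(x)=P$ (see Proposition~\ref{prop:power of 2}). A walk fails only if $a\in D$ or $b\in D$, and since $G_{q,\theta}$ is $C_4$-free each vertex has at most one common neighbour with $u$ (resp.\ with $v$), so the number of lost walks is at most $|D\cap N(u)|+|D\cap N(v)|$. As $u,v\in H_{q,\theta}$ are not absolute, neither is adjacent to $x$, so $D\cap N(u)=P\cap N(u)$; and a short computation in characteristic two — where $c\mapsto c^2$ is a bijection of $\mathbb{F}_q$, so the squares $(\theta+c)^2$ sweep out an affine line meeting the line $\theta^{2u}S$ in at most one point unless $\theta^{2u}=p$ — shows that every vertex except the unique one with $\theta^{2u}=p$ has at most one absolute neighbour, while that exceptional vertex (whose neighbourhood is all of $P$) is exactly $x$ and has been removed. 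Hence each of $u,v$ has at most one absolute neighbour, at most a bounded number of walks are destroyed, and for $q>4$ a path inside $H_{q,\theta}$ survives. The main obstacle is precisely this last bookkeeping rather than the algebra: the family of candidate paths has size only about $q$, comparable to $|D|$, so a crude union bound fails, and one must combine $C_4$-freeness with the characteristic-two computation — and the identification of the unique ``bad'' vertex with the deleted vertex $x$ — to guarantee that only $O(1)$ of the available paths are lost.
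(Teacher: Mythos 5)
Your proposal follows the same overall strategy as the paper: reduce saturation to producing, for each non-edge $uv$, a representation $u+v=\alpha-\beta+\gamma$ with $\alpha,\beta,\gamma\in A(q,\theta)$ that yields a genuine path $u\,(\alpha-u)\,(\gamma-v)\,v$, show there are roughly $q$ such representations, discard the degenerate ones, and, for $H_{q,\theta}$, observe that deleting $x$ and the absolute points destroys at most $2$ of the resulting $C_4$'s because $x$ is adjacent to every absolute point and $C_4$-freeness then forces every other vertex to have at most one absolute neighbour. That last part of your argument is essentially identical to the paper's. Where you differ is in how the representations are counted: the paper first proves (Lemma \ref{le:not q+1}) that $A-A$ is all of $\mathbb{Z}_{q^2-1}$ except the nonzero multiples of $q+1$, and then in Lemma \ref{le:paths} runs a short two-case combinatorial argument in which each $\alpha\in A$ contributes at most one triple $(\alpha,\beta,\gamma)$ by the Sidon property; you instead redo the field computation from scratch and count points on a conic $(z_1c_1-z_0)(z_1c_3-z_0)=M$ over $\mathbb{F}_q$. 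Both routes give at least $q-1$ walks, and your conic is in substance the same computation that underlies the paper's Lemma \ref{le:not q+1}.

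Two points need attention. First, the step ``each \emph{[coincidence]} excludes only a bounded number of points of the conic'' is not justified as written: if $M=0$ the conic degenerates into the union of two lines, and a single value of $c_1$ can account for $q$ of its points, so excluding the one value $c_1^*$ with $\alpha=2u$ could a priori destroy almost everything. You must rule this out. It does follow — if $z_1\neq 0$ and $M=0$ the conic would have $2q-1$ points, hence $2q-1$ representations of $u+v$, whereas the Sidon property caps the number at $q$ (each $\alpha$ determines at most one pair $(\beta,\gamma)$ with $\gamma-\beta=u+v-\alpha$) — but this is exactly the observation the paper builds its count on directly, which is why its version avoids the issue. Once non-degeneracy is in hand, each of your three coincidences kills at most one point, leaving $q-4$ good paths (versus the paper's $q-3$), which suffices for the stated ranges of $q$. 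Second, invoking Theorem \ref{th:induced subgraph} for $C_4$-freeness is both unnecessary and structurally awkward: the paper proves the saturation theorem \emph{before} the embedding theorem and uses the resulting diameter bound in Section 5, and $C_4$-freeness of $G_{q,\theta}$ is immediate from the Sidon property of $A(q,\theta)$ (two vertices with two common neighbours would give two distinct representations of the same difference). Cite that instead.
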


A recent result of Firke, Kosek, Nash, and Williford \cite{fknw} is that
\[
\textup{ex}(q^2 + q , C_4) \leq \frac{1}{2}q(q+1)^2  - q
\]
for even $q$.  By deleting a vertex of degree $q$ from $ER_q$, we find
\[
\textup{ex}( q^2 + q , C_4) = \frac{1}{2}q(q + 1)^2 - q
\]
whenever $q$ is a power of 2.  It is also announced in \cite{fknw} that when $q$ is a power of 2, the $C_4$-free graphs with $q^2 + q$ vertices and $\frac{1}{2}q ( q+ 1)^2 - q$ edges must be those obtained by deleting a vertex of degree $q$ from 
an orthogonal polarity graph.  This leads to the following question: under what conditions is a subgraph of $ER_q$ extremal?
One can use Theorem \ref{th:saturation} and Proposition \ref{prop:odd q again} to deduce that there are $n$-vertex subgraphs of $ER_q$ that are $C_4$-saturated but have less than $\textup{ex}(n , C_4)$ edges.  This shows that $C_4$-saturation is not enough to force a subgraph of $ER_q$ to be extremal.

The rest of this paper is organized as follows.  In Section 2 we establish some basic properties about $A(q, \theta)$ and prove both  Propositions \ref{cor:odd q construction} and \ref{prop:odd q again}.  In Section 3 we prove Proposition \ref{prop:power of 2} and in Section 4, we prove Theorem \ref{th:saturation}.  Our main theorem,
Theorem \ref{th:induced subgraph}, is proved in Section 5.  Using Theorem \ref{th:induced subgraph} we prove Theorem
\ref{th:petersen in erq} in Section 6.


\section{Preliminary results and proofs Propositions 1.4 and 1.5}

In this section $q$ is a prime power, $\theta$ is a generator of  $\mathbb{F}_{q^2}^*$, and $A = A ( q, \theta)$ is a Bose-Chowla Sidon set.  Here and throughout the rest of the paper, 
\[
H = \{ 0 , q + 1 , \dots , (q -2)(q + 1) \}
\]
is the subgroup of $\mathbb{Z}_{q^2 - 1}$ generated by $q + 1$.  The first lemma is known.  We write 
$A - A$ for the set $\{a - b : a , b \in A \}$.  

\begin{lemma}\label{le:not q+1}
If $A$ is a Bose-Chowla Sidon set in $\mathbb{Z}_{q^2 - 1}$ and $H = \langle q + 1 \rangle$, then 
\begin{center}
$A \cap H = \emptyset$  and  $A - A = \mathbb{Z}_{q^2 - 1} \backslash \{ q + 1 , 2(q + 1) , \dots , (q -2)(q +1) \}$.
\end{center}
\end{lemma}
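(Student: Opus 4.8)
The plan is to exploit the fact that raising $\theta$ to powers in $H$ lands exactly in the nonzero elements of the subfield $\mathbb{F}_q$. First I would record the structural observation that underlies everything: since $H = \langle q+1 \rangle$ and $q+1$ has order $(q^2-1)/(q+1) = q-1$ in $\mathbb{Z}_{q^2-1}$, the set $\{\theta^h : h \in H\}$ is a subgroup of $\mathbb{F}_{q^2}^*$ of order $q-1$. As $\mathbb{F}_{q^2}^*$ is cyclic it has a unique subgroup of each order dividing $q^2-1$, and $\mathbb{F}_q^*$ is a subgroup of order $q-1$; hence $\{\theta^h : h \in H\} = \mathbb{F}_q^*$. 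Equivalently, for $a \in \mathbb{Z}_{q^2-1}$ one has $\theta^a \in \mathbb{F}_q$ if and only if $a \in H$ (using that $\theta^a \neq 0$ always). The point I will use repeatedly is that $\theta \notin \mathbb{F}_q$, since $\theta$ generates $\mathbb{F}_{q^2}^*$ and so has order $q^2-1 > q-1$.

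With this in hand, $A \cap H = \emptyset$ is immediate: if $a \in A \cap H$, then $\theta^a \in \mathbb{F}_q$ (from $a \in H$) and $\theta^a - \theta \in \mathbb{F}_q$ (from $a \in A$), so subtracting gives $\theta \in \mathbb{F}_q$, a contradiction. The same device proves the containment $A - A \subseteq \mathbb{Z}_{q^2-1} \setminus (H \setminus \{0\})$. For $a,b \in A$ write $\theta^a = \theta + \alpha$ and $\theta^b = \theta + \beta$ with $\alpha, \beta \in \mathbb{F}_q$. If the difference $a-b$ lies in $H$, then $\theta^{a-b} = \gamma$ for some $\gamma \in \mathbb{F}_q^*$, and the relation $\theta^a = \gamma \theta^b$ rearranges to $(1-\gamma)\theta = \gamma\beta - \alpha \in \mathbb{F}_q$. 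Since $\theta \notin \mathbb{F}_q$ this forces $\gamma = 1$, i.e.\ $\theta^{a-b} = 1$, i.e.\ $a = b$; thus the only element of $A-A$ lying in $H$ is $0$, and no nonzero multiple of $q+1$ occurs as a difference.

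It remains to promote this containment to an equality, and here I would avoid constructing preimages directly and instead count. The set $\{q+1, 2(q+1), \dots, (q-2)(q+1)\}$ is precisely $H \setminus \{0\}$, which has $q-2$ elements, so the target set $\mathbb{Z}_{q^2-1} \setminus (H \setminus \{0\})$ has $(q^2-1) - (q-2) = q^2 - q + 1$ elements. On the other hand, because $A$ is a Sidon set of size $q$, its $q(q-1)$ ordered differences $a-b$ with $a \neq b$ are pairwise distinct (a relation $a-b = c-d$ forces $a+d = c+b$ and hence $\{a,d\} = \{c,b\}$ as multisets), so together with $0$ this gives $|A-A| = q(q-1)+1 = q^2 - q + 1$. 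Since $A-A$ is contained in a set of the same cardinality, the containment is an equality, completing the proof. The step requiring the most care is this count: one must verify that the Sidon property yields exactly $q(q-1)$ distinct nonzero differences, which is precisely what makes the two cardinalities match and lets the clean set-theoretic identity fall out.
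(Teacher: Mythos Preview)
Your proof is correct and follows essentially the same approach as the paper: you show $A \cap H = \emptyset$ and $(A-A)\cap (H\setminus\{0\})=\emptyset$ by using that $\theta^{t(q+1)}\in\mathbb{F}_q$ and $\theta\notin\mathbb{F}_q$, and then finish by the counting argument $|A-A|=q(q-1)+1=|\mathbb{Z}_{q^2-1}\setminus(H\setminus\{0\})|$. The only cosmetic difference is that you make the identification $\{\theta^h:h\in H\}=\mathbb{F}_q^*$ explicit at the outset, whereas the paper invokes $\theta^{t(q+1)}\in\mathbb{F}_q$ directly.
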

\begin{proof}
If $t (q + 1) \in A$, then for some $b \in \mathbb{F}_q$, $\theta^{ t ( q + 1) } = \theta + b$.
Since $\theta^{ t ( q + 1)} \in \mathbb{F}_q$, we have $\theta = \theta^{ t ( q + 1) } - b \in \mathbb{F}_q$ which is a contradiction.  

Let $a_i , a_j \in A$ and suppose that $a_i - a_j = t (q + 1)$, $\theta^{a_i} = \theta + b_i$, and $\theta^{a_j} = \theta + b_j$ where $b_i , b_j \in \mathbb{F}_q$.  Then 
\[
\theta + b_i = ( \theta + b_j ) \theta^{t ( q + 1) }.
\]
Equating coefficients of $\theta$, we obtain $\theta^{t ( q + 1)  } = 1$ so that $t (q + 1) = 0$ in $\mathbb{Z}_{q^2- 1}$.  
This shows that $(A - A) \cap \{ q + 1 , 2(q + 1) , \dots , (q - 2)(q + 1) \} = \emptyset$.  Since there are 
$q ( q - 1) = q^2 - q$ nonzero elements of $A  - A$ as $A$ is a Sidon set, we must have
$A - A = \mathbb{Z}_{q^2 - 1} \backslash \{ q + 1 , 2(q + 1) , \dots , (q -2)(q +1) \}$.   
\end{proof}

\begin{lemma}\label{new lemma 1}
If $A$ is a Bose-Chowla Sidon set in $\mathbb{Z}_{q^2 - 1}$, then we can label the elements of $A$ so that   
$A = \{ a_1 , \dots , a_q \}$ and  
\[
a_i = i + m_i ( q + 1)
\]
with $m_i \in \{ 0 , 1, \dots , q- 2 \}$ for $1 \leq i \leq q$.  
\end{lemma}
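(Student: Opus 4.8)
The plan is to use Lemma \ref{le:not q+1} to show that the $q$ elements of $A$ are spread out one per nonzero coset of $H = \langle q+1 \rangle$, and then to read off the claimed representation directly from the coset structure of $\mathbb{Z}_{q^2-1}$.

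First I would record the structure of $\mathbb{Z}_{q^2-1}$ relative to $H$. Since $(q+1)(q-1) = q^2-1$, the subgroup $H$ has order $q-1$ and index $q+1$, so it has exactly $q+1$ cosets; these are precisely the residue classes modulo $q+1$. Consequently every element of $\mathbb{Z}_{q^2-1}$ has a unique expression $r + m(q+1)$ with $r \in \{0,1,\dots,q\}$ and $m \in \{0,1,\dots,q-2\}$. Indeed, there are $(q+1)(q-1) = q^2-1$ such pairs, and the values they produce are distinct because $r$ is the residue modulo $q+1$ and $m$ is then determined.

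Next I would invoke both conclusions of Lemma \ref{le:not q+1}. Because $A \cap H = \emptyset$, no element of $A$ lies in the coset $r = 0$, so every element of $A$ has residue in $\{1,2,\dots,q\}$ modulo $q+1$. Moreover, if two elements $a_i, a_j \in A$ had the same residue modulo $q+1$, then $a_i - a_j$ would be a multiple of $q+1$ lying in $A - A$; but Lemma \ref{le:not q+1} states that $A - A$ contains no nonzero multiple of $q+1$, which forces $a_i = a_j$. Hence distinct elements of $A$ occupy distinct nonzero cosets of $H$.

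Finally, since $|A| = q$ and there are exactly $q$ nonzero residue classes $\{1,\dots,q\}$ modulo $q+1$, the pigeonhole principle gives that each such class contains exactly one element of $A$. I would then label the elements of $A$ by their residues: for each $i \in \{1,\dots,q\}$ let $a_i$ be the unique element of $A$ congruent to $i$ modulo $q+1$, which by the first step equals $i + m_i(q+1)$ for a unique $m_i \in \{0,\dots,q-2\}$, giving the stated labeling. No step here is a genuine obstacle, since all of the arithmetic content is already isolated in Lemma \ref{le:not q+1}; what remains is the coset bookkeeping together with a one-line pigeonhole count. The only point requiring care is matching the $q$ nonzero cosets with the index set $\{1,\dots,q\}$ so that the residue $r = i$ ranges over exactly these values.
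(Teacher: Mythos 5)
Your proof is correct and follows the same route as the paper, which disposes of the lemma in one sentence by citing Lemma \ref{le:not q+1} to note that distinct elements of $A$ have distinct nonzero residues modulo $q+1$. You have simply written out the coset bookkeeping and the pigeonhole count that the paper leaves implicit.
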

\begin{proof}
By Lemma \ref{le:not q+1}, no difference of two distinct elements of $A$ are congruent to $0$ modulo $q + 1$.  
\end{proof}

\bigskip

\begin{proof}[Proof of Proposition \ref{cor:odd q construction}]
The graphs $G_{q, \theta}$ are known to be $C_4$-free.  It is shown in \cite{tt} that if
$A (q , \theta )$ has $t$ even elements, then the number of absolute points in $G_{q, \theta}$ is $2t$.
Since $q$ is odd, we have $t = \frac{1}{2} (q - 1)$ by Lemma \ref{new lemma 1}.  To see this, write 
$A = \{a_1 , \dots , a_q \}$ where $a_i = i + m_i ( q + 1)$.  Then $a_i$ is even when viewed as an element of $\mathbb{Z}$ precisely when $i$ is even.  The number of vertices of $G_{q, \theta}$ is $q^2 - 1$.  The number of edges of
$G_{q , \theta}$ is
\[
e(G_{q , \theta}) = \frac{1}{2} ( q (q^2 - 1 - 2t ) + (q - 1)(2t) ) = \frac{1}{2}q^3 - \frac{1}{2} q - t = \frac{1}{2}q^3 - q + \frac{1}{2}.
\]
\end{proof}

\bigskip

\begin{proof}[Proof of Proposition \ref{prop:odd q again}]
Let $p$ be a prime and $t \in \mathbb{N}$.  Let $q = p^{2t}$.
The field $\mathbb{F}_q$ contains $\mathbb{F}_{ q^{1/2} }$ as a subfield.  By considering the vertices 
in $ER_q$ whose coordinates can be taken to be in $\mathbb{F}_{q^{1/2}}$, we see that $ER_{q^{1/2}}$ is a subgraph of $ER_q$.  Let $H$ be a copy of $ER_{q^{1/2}}$ in $ER_q$.  By Brooks' Theorem,
\[
\alpha (H) \geq \frac{ n(H) }{ \Delta (H)} = \frac{q + q^{1/2} + 1}{q^{1/2} + 1} = q^{1/2} + \frac{1}{q^{1/2}+1}.
\]
Choose an independent set $I \subset V(H)$ with $|I| = q^{1/2} - 1$ and let $S = V(H) \backslash I$.  Then
$|S| = q + 2$, $e(S) = \frac{1}{2}q^{1/2} ( q^{1/2 } + 1 )^2$, and so
\begin{eqnarray*}
e(ER_q \backslash S ) & \geq & \frac{1}{2}q(q+1)^2 - (q + 2)(q+ 1) + \frac{1}{2}q^{1/2} ( q^{1/2} + 1)^2 \\
& = & \frac{1}{2}q^3 + \frac{1}{2}q^{3/2} - \frac{3}{2}q + \frac{1}{2} q^{1/2}  - 2.
\end{eqnarray*}
This shows that for any square prime power $q$,
\[
\textup{ex}(q^2 - 1 , C_4) \geq  \frac{1}{2}q^3 + \frac{1}{2}q^{3/2} - \frac{3}{2}q + \frac{1}{2} q^{1/2}  - 2.
\]
\end{proof}


\section{Proof of Proposition 1.6}

In this section $q$ is a power of 2, $\theta$ is a generator of $\mathbb{F}_{q^2}^*$, and $A = A(q, \theta)$ is a Bose-Chowla Sidon set.

\begin{lemma}\label{le:absolute neighborhood}
If $q=2^{ \alpha}$ and $\alpha \in \mathbb{N}$, then there is an $x \in \mathbb{Z}_{q^2 - 1}$ such that
\[
x + y \in A
\]
for all $y \in \mathbb{Z}_{q^2 - 1}$ which satisfy $y + y \in A$.
\end{lemma}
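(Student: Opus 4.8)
The plan is to make everything concrete using the parametrization from Lemma \ref{new lemma 1}. Recall that every element of $A$ can be written as $\theta^a = \theta + b$ for some $b \in \mathbb{F}_q$, and since $q = 2^\alpha$ we have $-1 = 1$ in $\mathbb{F}_q$, so doubling and squaring interact nicely with the Frobenius map. The condition ``$y + y \in A$'' is the statement that the absolute point associated to $y$ is a neighbor of $x$; equivalently, $2y \in A$, which by definition means $\theta^{2y} - \theta \in \mathbb{F}_q$. The goal is to produce a single $x$ whose $A$-translate hits all such $y$ simultaneously, i.e. $\theta^{x+y} - \theta \in \mathbb{F}_q$ for every $y$ with $\theta^{2y} - \theta \in \mathbb{F}_q$.

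First I would identify the absolute points explicitly. The absolute points are the $y$ with $2y \in A$, so I would solve $\theta^{2y} = \theta + b$ for $b \in \mathbb{F}_q$. In characteristic $2$, $\theta^{2y} = (\theta^y)^2$, so writing $z = \theta^y$ this becomes $z^2 = \theta + b$, i.e. $z^2 - \theta \in \mathbb{F}_q$. Taking the $q$-th power (the Frobenius of $\mathbb{F}_{q^2}$ over $\mathbb{F}_q$) of $z^2 - \theta = b$ gives $z^{2q} - \theta^q = b$ as well, so subtracting yields $z^{2q} - z^2 = \theta^q - \theta$. This is a single equation constraining $z = \theta^y$, and I expect it to pin down the absolute points as a coset-like set; in particular there should be exactly $q$ of them, matching the claim in the paper that $G_{q,\theta}$ has $q$ absolute points when $q$ is a power of $2$.

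Next I would guess the correct $x$ and verify it. The natural candidate is the $x$ for which $\theta^x$ is the ``square root direction'' linking the defining relation for $A$ to the doubling relation; concretely, I would look for $x$ such that $\theta^{2x}$ equals a specific field element (for instance solving $\theta^{2x} = \theta^q$ or a similar Frobenius-compatible relation) so that $x + y$ can be re-expressed in terms of the already-solved quadratic. Given the characterization $z^{2q} - z^2 = \theta^q - \theta$ of the absolute points, I would compute $\theta^{x+y} - \theta = \theta^x z - \theta$ and check directly that this lies in $\mathbb{F}_q$ for every admissible $z$, using that an element $w \in \mathbb{F}_{q^2}$ lies in $\mathbb{F}_q$ exactly when $w^q = w$. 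That is, the verification reduces to checking $(\theta^x z - \theta)^q = \theta^x z - \theta$, and expanding with $\theta^{xq}$, $z^q$, and the relation among them should collapse to an identity.

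The main obstacle will be correctly isolating $\theta^x$: I need to find the one field element whose multiplication sends the entire solution set of $z^{2q} - z^2 = \theta^q - \theta$ into the affine line $\theta + \mathbb{F}_q$, and showing such an element exists requires understanding the structure of that solution set as a translate of the additive subgroup $\{w : w^q = w\} = \mathbb{F}_q$ (or of its image under squaring). I expect the squaring map in characteristic $2$ to be the crucial simplification, since it is an additive bijection on $\mathbb{F}_{q^2}$, turning the quadratic constraint into a linear one over $\mathbb{F}_2$ and making the set of absolute points an additive coset; once that linear structure is exposed, the existence of the desired $x$ should follow by a dimension or counting argument rather than an explicit construction. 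I would then confirm that the $x$ produced is unique, consistent with the paper's later assertion that there is a unique such vertex.
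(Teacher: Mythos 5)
Your plan is sound and would yield a correct proof, but it is a genuinely different route from the paper's. The paper stays in $\mathbb{Z}_{q^2-1}$: it observes that the absolute points are exactly $2^{2\alpha-1}\cdot A$ (since $2^{2\alpha-1}$ is the inverse of $2$ modulo $q^2-1$), so the problem becomes finding $x$ with $x + 2^{2\alpha-1}\cdot A = A$, and this is settled by quoting Lindstr\"{o}m's theorem that $A - c$ equals the auxiliary set $B(q,\theta)$, which is invariant under multiplication by $p=2$; the explicit answer is $x = c - 2^{2\alpha-1}c$. You instead work inside $\mathbb{F}_{q^2}$, and your structural observation is exactly right: since squaring is an additive bijection in characteristic $2$, the condition $(\theta^y)^2 \in \theta + \mathbb{F}_q$ says that the absolute points are the discrete logarithms of the affine line $\sqrt{\theta} + \mathbb{F}_q$, and you need $\theta^x$ to carry this line onto $\theta + \mathbb{F}_q$. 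The counting argument you defer to does close the gap: $\mathbb{F}_{q^2}^*$ acts on the $q^2-1$ affine $\mathbb{F}_q$-lines avoiding the origin with trivial stabilizers, hence transitively, so the required multiplier exists and is unique (which also gives you, for free, the uniqueness of $x$ and the count of $q$ absolute points that the paper asserts separately). Alternatively one can solve explicitly: comparing direction spaces forces $\theta^x \in \mathbb{F}_q^*$, and then $\theta^x\sqrt{\theta} \in \theta + \mathbb{F}_q$ squares to the condition $(\theta^x)^2 = \theta + \theta^q$, i.e.\ $\theta^{2x}$ is the trace of $\theta$ --- note this corrects your tentative guess $\theta^{2x} = \theta^q$, which is not the right relation. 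On balance, the paper's proof is shorter but imports Lindstr\"{o}m's result as a black box, while your argument is self-contained and exposes more of the structure; either is acceptable once the final existence step is written out.
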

\begin{proof}
Suppose $y + y \in A$.  Then $2y \equiv a ( \textup{mod}~2^{2 \alpha} - 1 )$ for some $a \in A$ which implies
$y \equiv 2^{ 2 \alpha -1} a (\textup{mod}~2^{2 \alpha} - 1)$.  If $A = \{a_1 , \dots , a_q \}$, then the absolute points are
\[
2^{2 \alpha - 1} a_1 , 2^{2 \alpha - 1} a_2 , \dots , 2^{ 2 \alpha -1} a_q.
\]
A vertex $x$ is adjacent to each of these vertices if
\begin{equation}\label{eq:special x}
x+ 2^{2 \alpha - 1} \cdot A = A
\end{equation}
where
\[
k \cdot A := \{ k a : a \in A \}.
\]
To show that there is such an $x$, we will use the following result of
Lindstr\"{o}m \cite{l}.

\begin{theorem}[Lindstr\"{o}m]\label{th:lindstrom 1}
Let $p$ be a prime, $q = p^m$, $m \in \mathbb{N}$, and $\theta$ be a generator of $\mathbb{F}_{q^2}^*$.
Let $B(q, \theta) = \{ b \in \mathbb{Z}_{q^2 - 1} : \theta^{bq} + \theta^q = 1 \}$.  If
$c \in \mathbb{Z}_{q^2-1}$ and $\theta^c + \theta = \theta^q$, then
\begin{equation*}\label{eq:multiply by 2}
A(q , \theta)   - c \equiv B(q , \theta)  (\textup{mod}~q^2 - 1 ).
\end{equation*}
Furthermore, $p \cdot B(q, \theta) \equiv B( q , \theta) ( \textup{mod}~q^2 - 1)$.
\end{theorem}

We apply Theorem \ref{th:lindstrom 1} with $p =2$ and $q = 2^{ \alpha}$ to get
\[
2^{2 \alpha - 1} \cdot A - 2^{2 \alpha - 1} c \equiv 2^{2 \alpha - 1} \cdot B(q, \theta) \equiv
B(q, \theta) \equiv A - c ( \textup{mod}~q^2- 1 ).
\]
If we take $ x= c - 2^{2 \alpha - 1}c $, then (\ref{eq:special x}) holds and we are done.
\end{proof}

\bigskip

\begin{proof}[Proof of Proposition \ref{prop:power of 2}]
Consider $G_{q, \theta}$ where $q = 2^{ \alpha}$, $\alpha \in \mathbb{N}$.  Let $H_{q,\theta}$ be the subgraph of $G$ obtained by
removing the vertex $x$ of Lemma \ref{le:absolute neighborhood} and all of the neighbors of $x$.  Then $H_{q,\theta}$ has
$q^2 - 1 - (q + 1) = q^2 - q - 2$ vertices, and
\[
e(H_{q,\theta}) = \frac{1}{2} ( (q^2 - 1)q - q ) - q - q(q - 2) = \frac{1}{2}q^3 - q^2.
\]
\end{proof}


\section{Proof of Theorem 1.7}

In this section $q$ is a prime power, $\theta$ is a generator of  $\mathbb{F}_{q^2}^*$, and $A = A ( q, \theta)$ is a Bose-Chowla Sidon set.

\begin{lemma}\label{le:paths}
For any $x \in \mathbb{Z}_{q^2 - 1} \backslash A$, there are at least $q-1$ ordered triples
$(a,b,c) \in A^3$ with $a  - b + c = x$, $a \neq b$, and $b \neq c$.
\end{lemma}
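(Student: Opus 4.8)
The plan is to reparametrize the equation $a - b + c = x$ as $a - x = b - c$, turning the count of triples into a statement about the difference set $A - A$, and then to invoke Lemmas \ref{le:not q+1} and \ref{new lemma 1}. The first thing I would observe is that the two side conditions are automatic. If $a = b$, then $a - b + c = c = x$, forcing $x = c \in A$ and contradicting $x \notin A$; likewise $b = c$ gives $a = x \in A$, again a contradiction. Hence every triple $(a,b,c) \in A^3$ with $a - b + c = x$ already satisfies $a \neq b$ and $b \neq c$, so it suffices to bound from below the total number $T$ of solutions of $a - b + c = x$ in $A^3$.

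To count $T$, I would fix $a \in A$ and count pairs $(b,c) \in A^2$ with $b - c = a - x$. Since $x \notin A$, the element $a - x$ is nonzero for every $a \in A$. Because $A$ is a Sidon set, each nonzero element of $\mathbb{Z}_{q^2 - 1}$ has at most one representation as a difference $b - c$ with $b, c \in A$ (if $b - c = b' - c'$ then $b + c' = b' + c$, so $\{b,c'\} = \{b',c\}$, which for a nonzero difference forces $b = b'$ and $c = c'$). Thus the inner count is $1$ when $a - x \in A - A$ and $0$ otherwise, giving $T = |\{a \in A : a - x \in A - A\}|$.

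Finally I would evaluate this using Lemma \ref{le:not q+1}, which states that $A - A$ is all of $\mathbb{Z}_{q^2 - 1}$ except the nonzero multiples of $q+1$. Therefore $a - x \notin A - A$ precisely when $a - x$ is a nonzero multiple of $q+1$, i.e. when $a \equiv x \pmod{q+1}$ (the condition $a \neq x$ being free since $x \notin A$). By Lemma \ref{new lemma 1} the elements of $A$ occupy the distinct residue classes $1, 2, \dots, q$ modulo $q+1$, so at most one $a \in A$ can satisfy $a \equiv x \pmod{q+1}$. Hence at most one of the $q$ terms defining $T$ vanishes, and $T \geq q - 1$, as required (indeed $T = q$ when $x \equiv 0 \pmod{q+1}$ and $T = q-1$ otherwise).

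The main obstacle is not really computational: once the reduction $a - x = b - c$ is in place the argument is short. The genuine content is recognizing that the side conditions $a \neq b$ and $b \neq c$ cost nothing because $x \notin A$, and that the count collapses to asking how many $a \in A$ avoid the single bad residue class modulo $q+1$. The step I would be most careful to justify is that the residues of $A$ modulo $q+1$ are distinct, via Lemma \ref{new lemma 1}, since that is exactly what guarantees the excluded set removes at most one element of $A$ rather than several.
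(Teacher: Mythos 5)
Your proof is correct and follows essentially the same route as the paper: both rewrite $a-b+c=x$ as $c-b=x-a$, use the characterization of $A-A$ from Lemma \ref{le:not q+1} together with the Sidon property to see that each admissible $a$ yields exactly one triple, and observe that $x \notin A$ makes the conditions $a \neq b$, $b \neq c$ automatic. The only difference is cosmetic: you avoid the paper's case split on whether $x$ is a multiple of $q+1$ by invoking Lemma \ref{new lemma 1} to see that at most one element of $A$ lies in the residue class of $x$ modulo $q+1$, which is a clean unification (and you correctly do not discard the triples with $a=c$, which the paper's Case 1 excludes even though the statement does not require it).
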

\begin{proof}
We consider two cases.

\noindent
\textit{Case 1:} $x = j ( q + 1)$ with $1 \leq j \leq q - 1$.

Let $a \in A$ and consider $x - a$.  By Lemma \ref{le:not q+1}, $a$ is not of the form $t (q + 1)$ for any $1 \leq t \leq q - 1$.
This implies that $x - a$ is also not of the form $t(q+1)$ with $1 \leq t \leq q- 1$.  
By Lemma \ref{le:not q+1}, there is a pair $b,c \in A$ with $c-b = x - a$.  Since $x - a \neq 0$, the pair $b,c$ is unique because $A$ is a Sidon set.
We can rewrite this equation as $a - b + c = x$.  If $a = b$ or $c = b$, then $x \in A$ which is impossible by
Lemma \ref{le:not q+1}.  There are $q$ choices for $a$.  Also, there is at most one $a \in A$ for which
$a - b +a = x$.  To see this, suppose we also have $a' - b + a' = x$ for some $a' \in A$.  Then
$a + a  = a'  + a'$ and by the Sidon property, $a = a'$.  This shows that there is at most one triple $(a,b,c)$ for which
$a = c$, $a - b + c = x$, and $a \neq b$, $c \neq b$.  We conclude that in this case, there are at least
$q - 1$ desired triples.

\noindent
\textit{Case 2:} $x \neq j (q +1)$ for any $1 \leq j \leq q - 1$.

Suppose $A = \{a_1 , \dots , a_q \}$ and consider the $q$ distinct elements
\[
x - a_1 , x - a_2, \dots , x - a_q.
\]
We will show that at most one of these elements is of the form
$j(q + 1)$ with $1 \leq j \leq q - 1$.  Suppose that
$x - a_i = j(q +1)$ and $x - a_{i'} = j' (q + 1)$ where $i \neq i'$ and $1 \leq j \neq j' \leq q -1$.  Then
\[
a_i - a_{i'} = (x - j ( q+1) ) - (x - j' (q + 1) ) = (j' - j )( q +1)
\]
which contradicts Lemma \ref{le:not q+1}.  By relabeling if necessary, we may assume that
\[
x - a_i \notin \{ q+1 , 2(q + 1) , \dots , (q - 2)(q + 1) \}
\]
for $1 \leq i \leq q - 1$.
Let $i \in \{1,2, \dots , q- 1 \}$ and consider $x - a_i$.  There is a unique pair
$b_i , c_i \in A$ with $x - a_i = c_i -b_i$.  We can rewrite this equation as $x = a_i - b_i + c_i$.  If $a_i \neq b_i$ and $b_i \neq c_i$, then we are done.  Assume that $a_i = b_i$ or $b_i = c_i$.  Then $x \in A$, which contradicts our hypothesis.  Finally note that
each of the triples $(a_1, b_1, c_1), \dots , (a_{q-1} , b_{q-1}, c_{q-1})$ are all distinct since
$a_1, \dots , a_{q-1}$ are all distinct.
\end{proof}

\bigskip

\begin{proof}[Proof of Theorem \ref{th:saturation}]
Using Lemma \ref{le:paths}, we can now prove that $G_{q, \theta}$ is $C_4$-saturated.
Let $x,y \in \mathbb{Z}_{q^2 - 1}$ and suppose that $x$ and $y$ are not adjacent in $G_{q , \theta}$ so
$x + y \notin A$.  By Lemma \ref{le:paths}, there is a triple $a,b,c \in A$ with
$x + y = a - b + c$ where $a \neq b$ and $b \neq c$.  Let $z = a-x$ and $t = c-y$.  By construction,
$x$ is adjacent to $z$ and $y$ is adjacent to $t$.  Since $z + t  = x + y -a - c = b$, we also have that
$z$ is adjacent to $t$.  In order to conclude that $xzty$ is a path of length 3, we must argue that $x,z,t$, and $y$ are all distinct.

If $z = x$, then $a =0$ but $0 \notin A$.  Therefore, $z \neq x$ and similarly $t \neq y$.  Now $x$ and $y$ are not adjacent so that $z \neq y$, and $t \neq x$.  The last possibility is if $z = t$.  If this occurs, then $z + z = z + t = b$ and so $z$ is an absolute point.  In the case when $q$ is odd, we know that any vertex is adjacent to at most two absolute points (see \cite{tt}).  In the case when $q$ is even, there is only one vertex that is adjacent to more than one absolute point by Lemma \ref{le:absolute neighborhood}. Therefore there are at most $2$ triples that yield $z$ an absolute point.  By Lemma \ref{le:paths}, there are at least $q-3$ triples that can be chosen so that $z$ is not an absolute point.

The conclusion is that whenever $x + y \notin A$, there are at least $q-3$ paths of length 3 between $x$ and $y$ and so adding the edge $xy$ gives a 4-cycle.  This completes the proof of Theorem \ref{th:saturation} in the case of $G_{q, \theta}$.

We will use the following remark, that we have proved something stronger than $C_4$ saturation, in the proof of the case of
$H_{q , \theta}$.

\begin{remark}\label{re:number of c4s}
If $q$ is any prime power and $H$ is a graph obtained by adding an edge to $G_{q,\theta}$, then $H$ contains at least $q-3$ copies of $C_4$.
\end{remark}

It remains only to show that $H_{q,\theta}$ is $C_4$ saturated. Let $uv \not\in E(H_{q,\theta})$ and let $H = H_{q,\theta} \cup \{uv\}$ and $G = G_{q,\theta} \cup \{uv\}$.  By Remark \ref{re:number of c4s}, $G$ contains at least $q-3$ copies of $C_4$, all of which have $uv$ as an edge.  Since $H_{q , \theta}$ is obtained by removing the vertex $x$ and the set of absolute points from $G_{q, \theta}$, it suffices to show that this process destroys less than $q-3$ copies of $C_4$.

If removing $x$ and its neighborhood destroys a copy of $C_4$ that has $uv$ as an edge, then that $C_4$ must have as a vertex either $x$ or one of its neighbors. Since the entire neighborhood of $x$ is removed, neither $u$ nor $v$ is adjacent to $x$, and so the $C_4$ cannot have $x$ as a vertex.  Therefore the $C_4$ has $y$ as a vertex for some absolute point $y$. However, note that because $G_{q,\theta}$ is $C_4$-free and because $x$ is adjacent to every absolute point, any vertex not equal to $x$ can have at most one neighbor that is an absolute point. In particular, $u$ and $v$ are each connected to at most one absolute point, meaning that removing $x$ and its neighborhood can destroy at most $2 < q-3$ copies of $C_4$.
\end{proof}


\section{Proof of Theorem 1.2}

In this section $q$ is a power of a prime, $\theta$ is a generator of $\mathbb{F}_{q^2}^*$, and $A=A(q, \theta)$ is a
Bose-Chowla Sidon set.  The subgroup of $\mathbb{Z}_{q^2 - 1}$ generated by $q+1$ is denoted by $H$.  

\begin{lemma}\label{le:distances}
If $x$ and $y$ are distinct vertices of $G_{q, \theta}$, then the following hold.

\noindent
(i) $d(x,y) = 1$ if and only if $x + y \in A$.

\noindent
(ii) $d(x,y) = 2$ if and only if $x + y \notin A$ and $x-y \notin H$.

\noindent
(iii) $d(x,y) = 3$ if and only if $x+y \notin A$ and $x - y \in H$.
\end{lemma}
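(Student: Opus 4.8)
The plan is to treat the three parts in order, building everything on the two facts from Lemma~\ref{le:not q+1}: that $A - A = \mathbb{Z}_{q^2-1} \setminus (H \setminus \{0\})$, and hence (intersecting with $H$) that $(A-A) \cap H = \{0\}$. Since $x \neq y$ forces $x - y \neq 0$, the first fact lets me restate the hypotheses in their cleanest form: the condition $x - y \notin H$ is equivalent to $x - y \in A - A$, while $x - y \in H$ (that is, $x - y \in H \setminus \{0\}$) is equivalent to $x - y \notin A - A$. Part (i) is then immediate, since by Definition~\ref{def:Gq} distinct vertices $x,y$ are adjacent precisely when $x + y \in A$, which is exactly $d(x,y) = 1$.

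For part (ii) I would characterize common neighbors. A vertex $z \notin \{x,y\}$ is a common neighbor of $x$ and $y$ iff $x + z \in A$ and $y + z \in A$, say $x + z = a$ and $y + z = b$ with $a,b \in A$; subtracting gives $x - y = a - b \in A - A$. Conversely, if $x + y \notin A$ and $x - y \in A - A$, then since $x - y \neq 0$ and $A$ is a Sidon set there is a unique pair $a,b \in A$ with $x - y = a - b$, and $z := a - x = b - y$ is a common neighbor. The only thing left to check is $z \notin \{x,y\}$: if $z = y$ then $a = x + y \in A$, and if $z = x$ then $a = 2x$, forcing $b = a - (x-y) = x + y \in A$; both contradict $x + y \notin A$. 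Hence $d(x,y) = 2$ iff $x + y \notin A$ and $x - y \in A - A$, i.e. iff $x + y \notin A$ and $x - y \notin H$.

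Part (iii) is the crux. Since parts (i) and (ii) together say that $d(x,y) \le 2$ holds exactly when $x + y \in A$ or $x - y \in A - A$, the hypotheses $x + y \notin A$ and $x - y \in H$ immediately give $d(x,y) \ge 3$. To show $d(x,y) \le 3$ I would produce a path of length $3$. Since $x + y \notin A$, Lemma~\ref{le:paths} supplies a triple $(a,b,c) \in A^3$ with $a - b + c = x + y$, $a \neq b$, and $b \neq c$; setting $z = a - x$ and $w = c - y$ gives $x + z = a$, $z + w = b$, and $w + y = c$, all lying in $A$, so $x,z,w,y$ is a walk realizing three edges.

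The main obstacle is verifying that this walk is a genuine path, i.e. that $x,z,w,y$ are four distinct vertices, and this is where the hypothesis $x - y \in H \setminus \{0\}$ does the real work through the disjointness $(A-A)\cap(H\setminus\{0\}) = \emptyset$. I expect every potential coincidence to be ruled out automatically: $z = x$ would require $a = 2x$ and hence $c - b = y - x \in H \setminus \{0\}$, impossible; $w = y$ would require $c = 2y$ and hence $a - b = x - y \in H \setminus \{0\}$, impossible; and $z = w$ would require $a - c = x - y \in H \setminus \{0\}$, again impossible; while $z = y$ or $w = x$ would each force $x + y \in A$. Thus every triple furnished by Lemma~\ref{le:paths} already yields an honest path, and a single one suffices to conclude $d(x,y) = 3$.
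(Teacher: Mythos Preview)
Your proof is correct. Parts (i) and (ii) match the paper's argument essentially verbatim (you are a bit more explicit in checking $z\notin\{x,y\}$). For part (iii) the paper takes a shortcut you did not: rather than building the length-$3$ path by hand, it simply notes that $G_{q,\theta}$ has diameter at most $3$ as a consequence of Theorem~\ref{th:saturation} (the $C_4$-saturation result, proved earlier via Lemma~\ref{le:paths}), so (iii) follows immediately from (i), (ii), and this diameter bound.

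Your direct route invokes Lemma~\ref{le:paths} and then exploits the specific hypothesis $x-y\in H\setminus\{0\}$, together with $(A-A)\cap(H\setminus\{0\})=\emptyset$, to rule out every coincidence among $x,z,w,y$ automatically. This is actually cleaner for this particular lemma: in the proof of Theorem~\ref{th:saturation} the troublesome case $z=w$ (your notation) had to be handled by counting absolute-point neighbors and discarding a few bad triples, whereas here the hypothesis $x-y\in H$ kills it outright. The paper's route is shorter on the page but imports the whole saturation argument; yours is self-contained and needs only a single triple from Lemma~\ref{le:paths}.
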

\begin{proof}
The proof of (i) is the definition of adjacency in $G_{q, \theta}$.

Suppose $d(x,y) = 2$.  Certainly $x+y \notin A$ otherwise $x$ and $y$ are adjacent.  Let $z$ be a common neighbor of $x$ and $y$.  There are elements $a , b \in A$ such that $x + z = a$ and $z + y = b$.  Thus
$x - y = a - b$ and $a \neq b$ since $x \neq y$.  By Lemma \ref{le:not q+1},
\[
x - y \in ( A - A ) \backslash \{ 0 \} =  \mathbb{Z}_{q^2 - 1} \backslash \{ q + 1 , 2(q+1) , \dots , (q - 2) ( q + 1) \}.
\]
Convserely, suppose $x+y \notin A$ and $x - y \notin H$.  By Lemma \ref{le:not q+1}, we can write
$x - y = a - b$ for some $a,b \in A$ with $a \neq b$.  Let $z = a - x$.  Then $x + z \in A$ and
$z + y \in A$ so that $z$ is a common neighbor of $x$ and $y$.  Furthermore, $z \neq x$ and $z \neq y$ since $x$ and $y$ are not adjacent.

The case when $d(x,y) = 3$ now follows from (ii), and the fact that $G_{q , \theta}$ has diameter at most 3 which is a consequence of Theorem \ref{th:saturation}.
\end{proof}

\begin{lemma}\label{le:H lemma}
If $x$ and $y$ are vertices in $H$, then $d(x,y) \in \{ 0 , 3 \}$.
\end{lemma}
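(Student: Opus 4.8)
The plan is to use Lemma \ref{le:distances} together with the subgroup structure of $H$ and the disjointness $A \cap H = \emptyset$ from Lemma \ref{le:not q+1}. The key observation is that both characterizing conditions for distance $3$ in Lemma \ref{le:distances}(iii) are automatic when $x,y \in H$, so there is essentially nothing to compute.

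First I would handle the trivial case $x = y$, for which $d(x,y) = 0$ by definition. So assume $x \neq y$. Since $H = \langle q + 1 \rangle$ is a subgroup of $\mathbb{Z}_{q^2 - 1}$ and both $x, y \in H$, closure under addition and subtraction gives $x + y \in H$ and $x - y \in H$. I would then invoke Lemma \ref{le:not q+1}, which states $A \cap H = \emptyset$; since $x + y \in H$, this forces $x + y \notin A$.

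At this point both hypotheses of Lemma \ref{le:distances}(iii) are in hand: namely $x + y \notin A$ and $x - y \in H$. Applying that part of Lemma \ref{le:distances} immediately yields $d(x,y) = 3$. Combining the two cases gives $d(x,y) \in \{0, 3\}$, as desired. (Implicitly this uses that distances are well-defined and bounded by $3$, which follows from the diameter bound established via Theorem \ref{th:saturation} and already recorded in the proof of Lemma \ref{le:distances}.)

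There is no genuine obstacle here: the content is entirely front-loaded into the earlier lemmas. The only point requiring a moment's care is recognizing that the condition $x - y \in H$, which might look like it needs checking, is guaranteed for free by the fact that $H$ is a subgroup, and that the non-adjacency condition $x + y \notin A$ is likewise immediate from $x + y \in H$ and $A \cap H = \emptyset$. Once both observations are made, the result is a one-line application of Lemma \ref{le:distances}(iii).
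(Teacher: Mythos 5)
Your proof is correct and follows the same route as the paper's: closure of the subgroup $H$ gives $x+y, x-y \in H$, then $A \cap H = \emptyset$ from Lemma \ref{le:not q+1} rules out adjacency, and Lemma \ref{le:distances}(iii) gives distance $3$ for $x \neq y$. The only difference is that you spell out the trivial case $x = y$ and the diameter-$3$ caveat explicitly, which the paper leaves implicit.
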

\begin{proof}
If $x$ and $y$ are in $H$, then $x + y \in H$ and $x - y \in H$.  By Lemma \ref{le:not q+1}, $x + y \notin A$ and we are done by
Lemma \ref{le:distances}.
\end{proof}

\bigskip

Given $1 \leq i \leq q$, let $H + i = \{ h + i : h \in H \}$.

\begin{lemma}\label{le:Hi lemma}
Let $0 \leq i \leq q$.  If $x$ and $y$ are distinct vertices in $H + i$, then $x$ and $y$ are not joined by a path of length two.
\end{lemma}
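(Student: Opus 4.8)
The plan is to argue by contradiction, reducing the non-existence of a length-two path to the description of $A - A$ furnished by Lemma \ref{le:not q+1}. A path of length two joining $x$ and $y$ is exactly a common neighbor $z$ of the two vertices, so suppose such a $z$ exists. By the definition of adjacency in $G_{q,\theta}$ there are elements $a, b \in A$ with $x + z = a$ and $y + z = b$. Subtracting the two relations eliminates $z$ and gives $x - y = a - b$, and since $x \neq y$ we have $a \neq b$; hence $x - y$ is a nonzero element of $A - A$.

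On the other hand, because $x$ and $y$ both lie in the coset $H + i$ and $H$ is a subgroup, their difference satisfies $x - y \in H$; as $x \neq y$, in fact $x - y \in H \setminus \{0\} = \{ q+1 , 2(q+1) , \dots , (q-2)(q+1) \}$. But Lemma \ref{le:not q+1} states precisely that $A - A = \mathbb{Z}_{q^2 - 1} \setminus \{ q+1 , 2(q+1) , \dots , (q-2)(q+1) \}$, so this excluded set is disjoint from $A - A$. The two conclusions about $x - y$ therefore contradict each other, and no common neighbor $z$ can exist.

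There is really only one substantive ingredient here, namely the exact determination of the difference set $A - A$ in Lemma \ref{le:not q+1}; once that is in hand, the argument is the one-line computation eliminating $z$, and I expect no genuine obstacle. I would also remark that the hypothesis $0 \le i \le q$ plays no role beyond guaranteeing that $H + i$ is a genuine coset of $H$, so the same proof shows that any two distinct vertices lying in a common coset of $H$ fail to have a common neighbor. This coset-by-coset independence is exactly the structural feature I expect to exploit when assembling the induced-subgraph isomorphism of Theorem \ref{th:induced subgraph}.
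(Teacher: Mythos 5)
Your proof is correct and follows essentially the same route as the paper: both reduce a common neighbor $z$ to the relation $x - y = a - b \in (A-A)\setminus\{0\}$ and then contradict this with $x - y \in H \setminus \{0\}$, which Lemma \ref{le:not q+1} shows is exactly the set excluded from $A - A$. The only cosmetic difference is that the paper cites the corresponding computation from the proof of Lemma \ref{le:distances} rather than redoing it inline.
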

\begin{proof}
Let $x,y \in H + i$, say $x = h + i$ and $y = h' + i$ where $h , h ' \in H$.  Then $x - y = h - h' \in H \backslash \{ 0 \}$.
In the proof of Lemma \ref{le:distances}, it is shown that if two distinct vertices $z$ and $t$ are joined by a path of
length two, then $z - t \in (A - A ) \backslash \{ 0 \}$.  Since $x - y \in H \backslash \{ 0 \}$, we have by
Lemma \ref{le:not q+1} that $x - y \notin A - A$.
\end{proof}

\bigskip

\begin{proof}[Proof of Theorem \ref{th:induced subgraph}]
We are going to show that $G_{q, \theta}$ is a subgraph of $ER_q$ by adding a a set of $q + 2$ vertices and
some edges to $G_{q, \theta}$ to obtain a graph that is $C_4$-free, has $q^2 + q  +1$ vertices, and has
$\frac{1}{2}q(q + 1)^2$ edges.  We will then give an isomorphism from this graph to $ER_q$.  It will be convenient to add vertices and edges to $G_{q, \theta}$ in steps.  At each step we will check that the new graph is $C_4$-free.

\noindent
\textit{Step 1:} Constructing $G_1$.

Add vertices $z_0 , z_1 , \dots , z_q$ to $G_{q, \theta}$.  For $0 \leq i \leq q$, make $z_i$ adjacent to every vertex in the coset $H+i$.  Let this graph be $G_1$.  The graph $G_1$ has $q^2 - 1 + q +1 = q^2 + q$ vertices, and has
$e(G_{q, \theta}) + (q + 1)(q  - 1) = e(G_{q, \theta} ) + q^2 - 1$ edges.  The cosets
$H , H+1 , \dots , H + q$ are all pairwise disjoint so that there is no $C_4$ that contains two
vertices from the set $\{z_0 , z_1 , \dots , z_q \}$.  Suppose that there is a $C_4$ of the form $z_i (h + i) t (h' + i)$ for some
$0 \leq i \leq q$, $t \in V(G_{q, \theta})$, and $h,h' \in H$.  This implies $h+i$ and $h'+i$ are joined by a path of length two in $G_{q, \theta}$ contradicting Lemma \ref{le:Hi lemma}.  

\noindent
\textit{Step 2:} Constructing $G_2$.

Add a vertex $y$ to $G_1$ and make $y$ adjacent to $z_0, z_1, \dots , z_q$.  Call this graph $G_2$.  The graph
$G_2$ has $q^2 + q + 1$ vertices and $e(G_{q, \theta}) + q^2 + q$ edges.  Any $C_4$ in $G_2$ must be of the form
$y z_i t z_j$ for some $t \in V(G_{q,\theta})$ and $0 \leq i \neq j \leq q$.  This is impossible however as the cosets
$H+i$ and $H+j$ are disjoint.

\noindent
\textit{Step 3:} Constructing $G_3$.

For $1 \leq j \leq \lfloor \frac{q}{2} \rfloor$, make $z_j$ adjacent to $z_{q+1-j}$.  Call this graph $G_3$.
The number of vertices of $G_3$ is $q^2 + q + 1$, and the number of
edges of $G_3$ is
\[
e(G_3) = \left\{
\begin{array}{ll}
e( G_{q, \theta} ) + q^2 + q + \frac{q-1}{2} & \mbox{if $q$ is odd}, \\
e(G_{q , \theta}) + q^2 + q + \frac{q}{2}  & \mbox{if $q$ is even}.
\end{array}
\right.
\]
A potential $C_4$ in $G_3$ must use one of the edges $z_j z_{q + 1 - j}$.  Since $z_0$ is not adjacent to any of
$z_1, \dots , z_q$ and $y$ is the unique common neighbor of $z_j$ and $z_{q+1 - j}$, there is no $C_4$ that contains
$z_j$, $z_{q+1 - j}$ and $y$.  The only remaining possibility is a $C_4$ of the form
$z_j z_{q+1 - j} (h + q + 1 -j)(h' + j)$ where $h , h ' \in H$.  This is impossible as
$( h + q + 1 -  j  ) + (h ' + j) = h + h' + q + 1 \in H$ and $A \cap H = \emptyset$ by Lemma \ref{le:not q+1}.  

When $q$ is odd, $e(G_{q, \theta}) = \frac{1}{2}q^3 - q + \frac{1}{2}$ by Corollary \ref{cor:odd q construction}.  This gives
\[
e(G_3) = e(G_{q, \theta}) + q^2 + q + \frac{q-1}{2} = \frac{1}{2}q (q + 1)^2.
\]
When $q$ is even, $e(G_{q, \theta}) = \frac{1}{2}q^3 - q$ by Proposition \ref{prop:power of 2} and so $e(G_3) = \frac{1}{2}q ( q + 1)^2$.  In either case, the graph $G_3$ is $C_4$-free with $q^2 + q + 1$ vertices and $\frac{1}{2}q (q + 1)^2$ edges.
By a result of F\"{u}redi \cite{fu1996}, $G_3$ is an orthogonal polarity graph of a projective plane of order $q$.  
To prove $G_3$ is isomorphic to $ER_q$, we will give an isomorphism.    

We will use an alternative definition of adjacency in $ER_q$ which yields a graph isomorphic to the graph obtained using the original definition (see \cite{mw}).  Vertices $(x_0, x_1, x_2)$ and $(y_0, y_1, y_2)$ in $ER_q$ are adjacent if and only if
\begin{equation}\label{eq:ERq}
x_0y_2 + x_2y_0 = x_1y_1.
\end{equation}

The graph $G_3$ has $q^2 + q + 1$ vertices and $q^2 - q$ of them are of the form $a_i + t ( q +1)$ for some $i \in [q]$ and $0 \leq t \leq q- 2$.  Finding an algebraic relation akin to (\ref{eq:ERq}) that tells us when $a_i + t_1 ( q+1)$ is adjacent to 
$a_j + t_2 ( q + 1)$ is the main obstacle.  
Let $A  = \{a_1 , \dots , a_q \}$ and suppose $\theta^{a_i} = \theta + b_i$ where $\mathbb{F}_q = \{b_1 , \dots , b_q \}$.  
The key observation is that 
$a_i + t_1 ( q + 1) \in A  + t_1 ( q + 1) $ is adjacent to $a_j + t(q + 1) \in A + t_2 ( q + 1)$ if and only if 
\[
b_i + b_j = \mu^{t_1 + t_2} - \alpha
\]
where the primitive root $\theta$ satisfies $\theta^2 = \alpha \theta  + \beta$ and $\mu = \theta^{ - (q + 1)}$.  
To see this, suppose that $a_i + t_1 ( q + 1)$ is adjacent to $a_j + t_2 ( q + 1)$, i.e. $(a_i + t_1 ( q + 1) ) + ( a_j + t_2(q+1) ) \in A$.  Then 
\[
\theta^{ a_i + t_1 ( q+ 1) } \theta^{ a_j + t_2(q+1)} = \theta^a 
\]
for some $a \in A$ so that
\[
\theta^2 + \theta ( b_i + b_j )  + b_i b_j = \theta^{ - (t_1 + t_2) ( q + 1) } ( \theta + b) = \mu^t ( \theta + b)
\]
where $\theta^a = \theta + b$.  Since $\theta^2 = \alpha \theta + \beta$, we can equate coefficients of $\theta$ to get 
$\alpha + b_i + b_j = \mu^{t_1+ t_2}$.  Summarizing, we have $a_i + t_1 ( q+ 1)$
is adjacent to $a_j + t_2 ( q + 1)$ if and only if 
\[
 b_i + b_j = \mu^{t_1 + t_2} - \alpha
\]   
which can be viewed as an analog of (\ref{eq:ERq}).  Our isomorphism depends on the parity of $q$ and we deal with the odd case first.   

\textbf{Case 1}: $q$ is an odd prime power.  

Let $q$ be an odd prime power.  Assume that the primitive root $\theta$ satisfies $\theta^2 = \alpha \theta  +\beta$ where $\alpha$, $\beta \in \mathbb{F}_q$.  Assume $A$ has been labeled so that $a_i  = i + m_i ( q + 1)$ where 
$A = \{a_1 , \dots , a_q \}$.    
Let $\mu = \theta^{ - (q + 1) }$ and $\delta = 2^{ - 1} ( \alpha - 1)$.  Define the map $\phi: V (G_3) \to V(ER_q)$ as follows.

\begin{enumerate}
\item For $1 \leq i \leq q$ and $0 \leq t \leq q - 2$, let 
\[
\phi ( a_i + t ( q+1) ) = ( 1 , \mu^{t } - 1 , - \mu^{t} + \alpha + b_i - \delta ).
\]
\item For $0 \leq t \leq q - 2$, let $\phi ( ( q - 1 - t)(q + 1) ) = ( 0 , 1, \mu^{  t } - 1 )$.
\item For $1 \leq i \leq q$, let $\phi ( z_i) = ( 1 , - 1 , - b_i  - \delta )$, $\phi (z_0 ) = (0,0,1)$, and 
$\phi ( y) = (0 , 1, -1)$.  
\end{enumerate}

Let $1 \leq i \leq q$ and let $0 \leq t_1 \leq q - 2$.  

First consider $a_i + t_1 ( q+ 1)$.  In $ER_q$, the neighbors of 
\[
\phi ( a_i + t_1  ( q + 1))  = (1 , \mu^{t_1} - 1 , - \mu^{t_1} + \alpha + b_i - \delta )
\]
are 
$ (0,1, \mu^{t_1} - 1 ) = \phi ( ( q - 1 - t_1 )(q + 1) )$, $(1 , - 1 , b_i - \delta ) = \phi (z_i)$, and all vertices in the set
\begin{equation}\label{eq:1.1}
 \{ 
(1 , \mu^{t_2} - 1 , ( \mu^{t_2} - 1)( \mu^{t_1}  - 1) + \mu^{t_1} - \alpha - b_i + \delta ) : 0 \leq t_2 \leq q - 2 \}.
\end{equation}
A straightforward computation shows that (\ref{eq:1.1}) is the same as the set
\begin{equation*}
\{ \phi ( a_j + t_2 ( q + 1) ) : b_i + b_j = \mu^{t_1+ t_2} - \alpha \}.
\end{equation*}
In $G_3$, the neighbors of $a_i + t_1 ( q + 1)$ are $(q - 1 - t_1)(q +1)$, $z_i$, and 
all $a_j + t_2 ( q + 1)$ for which $b_i + b_j = \mu^{t_1 + t_2}  - \alpha$.  We conclude that 
$a_i + t_1 ( q +1)$ is adjacent to $x$ in $G_3$ if and only if 
$\phi (a_i + t_1 ( q + 1) )$ is adjacent to $\phi (x)$ in $ER_q$.  
  
\smallskip

Next we consider $(q - 1 - t_1 )(q + 1)$.  In $ER_q$, the neighbors of 
\[
\phi ( ( q - 1  - t_1) ( q + 1 ) ) = ( 0 , 1 , \mu^{t_1} - 1 )
\]
are $(0,0,1)= \phi ( z_0 )$ and $\{ ( 1 , \mu^{t_1} - 1 , z ) : z \in \mathbb{F}_q \} = \phi ( A + t_1 ( q + 1) )$.  In $G_3$, the neighbors of $(q - 1 - t_1)(q + 1)$ are $z_0$ and $A + t_1 ( q + 1) $.  This shows that 
$(q - 1 - t_1)(q + 1)$ is adjacent to $x$ in $G_3$ if and only if 
$\phi ( ( q- 1 - t_1)(q + 1) )$ is adjacent to $\phi (x)$ in $ER_q$.

Now consider $z_i$ where $0 \leq i \leq q$.  By definition, $\phi (z_0) = (0,0,1)$ and the neighbors of this vertex are 
$(0,1,-1) = \phi (y)$ and $\{ (0,1 , z) : z \in \mathbb{F}_q \backslash \{-1 \} \} = \phi (H)$.  In $G_3$, the neighbors of $z_0$ are 
$y$ and all vertices in $H$.  When $1 \leq i \leq q$, the neighbors of 
$\phi (z_i) = (1,-1 , -b_i - \delta )$ are $(0,1,-1) = \phi (y)$, $( 1 , \mu^t - 1 , - \mu^t + \alpha + b_i - \delta ) = \phi (a_i + t ( q + 1))$ for $0 \leq t \leq q - 2$, and $(1,-1, - b_{q + 1 - i} - \delta ) = \phi (z_{q + 1 - i } )$ where the subscript 
$q + 1 - i$ is taken modulo $q$.  To verify that 
$(1 , -1 , -b_{q + 1 - i } - \delta )$ is indeed a neighbor of $(1 , -1 , -b_i - \delta)$, we need to have
\[
- b_{q + 1 - i } - \delta - b_i -  \delta = 1
\]
since $\delta = 2^{-1} ( \alpha - 1)$.  This equation is equivalent to $b_i + b_{q + 1 - i} + \alpha = 0$.  This holds as 
\[
a_i + a_{q + 1 - i } = (q + 1) + ( m_i + m_{q + 1 - i } )( q + 1)
\]
and so $( \theta  + b_i ) ( \theta + b_{q + 1 - i } ) \in \mathbb{F}_q$ which means $\alpha + b_i + b_{q+1 - i } = 0$.
In $G_3$, the neighbors of $z_i$ are $y$, $z_{ q+ 1 - i}$, and all vertices in $H+i$ which is the same as the set 
$\{ a_i  + t ( q + 1) : 0 \leq t \leq q - 2 \}$.    
 
The last vertex to check is $y$ but this is not necessary as all edges incident to $y$ have an endpoint in 
$V(G_3) \backslash \{ y \}$.    

\medskip

\textbf{Case 2}: $q$ a power of 2.

Let $q$ be a power of 2 and $\theta$ be a primitive root of $\mathbb{F}_{q^2}$ chosen so that 
$\theta^2 = \theta + \beta$ for some $\beta \in \mathbb{F}_q$.  Such a $\theta$ exists by a result of 
Moreno \cite{moreno}.  We use the same notation as in Case 1.
Define the map $\phi$ as follows.  

\begin{enumerate}
\item For all $1 \leq i \leq q$ and $0 \leq t \leq q - 2$, let
\[
\phi ( a_i + t ( q +1 ) ) = ( 1 , \mu^t + 1 , b_i + 1 + \mu^t ).
\]
\item For $0 \leq t \leq q - 2$, let $\phi ( (q - 1 - t )(q + 1) ) = ( 0 , 1 , \mu^t + 1 )$.
\item For $1 \leq i \leq q$, let $\phi (z_i) = (1,1, b_i)$, $\phi (z_0) = (0,0,1)$, and $\phi (y) = (0,1,1)$.  
\end{enumerate}

We will be a little more brief in this case as the ideas are quite similar to the $q$ odd case. 

Let $1 \leq i \leq q$ and $0 \leq t_1 \leq q - 2$ and consider $a_i  + t_1 ( q + 1)$.  The neighbors of
\[
\phi (a_ i + t_1 ( q  + 1) ) = ( 1 , \mu^{t_1} - 1 , - \mu^{t_1}+ \alpha + b_i - \delta )
\]
are $(0 , 1, \mu^{t_1} + 1) = \phi ( ( q - 1 - t_1) ( q + 1) )$, $(1,1,b_i) = \phi (z_i)$, and 
\begin{eqnarray*}
(1 , \mu^{t_2}  + 1 , ( \mu^{t_2} + 1)( \mu^{t_1} + 1) + b_i + \mu^{t_1 + t_2} + \mu^{t_1} ) 
& = & (1 , \mu^{t_2} + 1 , b_j + 1 + \mu^{t_2} ) \\ &=& \phi (a_j + t_2 ( q + 1) )
\end{eqnarray*}
provided $b_i + b_j = \mu^{t_1 + t_2} + 1$.  In $G_3$, the neighbors of 
$a_i + t_1 ( q+1)$ are $(q - 1 - t_1) ( q + 1)$, $z_i$, and $a_j + t_2 ( q + 1)$ for any pair $j,t_2$ for which 
$b_i + b_j = \mu^{t_1 + t_2} + 1$.  

Next we consider $(q - 1 - t_1)(q+ 1)$.  In this case, the neighbors of 
$\phi ( ( q - 1 - t_1 ) ( q+ 1) )$ are 
$(0,0,1)  = \phi (z_0)$ and all vertices in the set 
\[
\{ (1 , \mu^{t_1} + 1 , z ) : z \in \mathbb{F}_q \} = \phi ( A + t_1 ( q + 1) ).
\]
The neighbors of $(q - 1 - t_1)(q + 1)$ in $G_3$ are $z_0$ and all vertices in $A + t_1 ( q + 1)$.

Now consider $z_i$ where $1 \leq i \leq q$.  The neighbors of $\phi (z_i) = (1,1,b_i)$ are 
$(0,1,1) = \phi (y)$, all vertices in the set 
\[
\{ (1 , \mu^t + 1 , \mu^t + 1 + b_i ) : 0 \leq t \leq q - 2 \} = \phi ( \{ a_i + t ( q+ 1) : 0 \leq t \leq q - 2 \} ),
\]
and $(1,1, 1 + b_i) = \phi ( z_{q + 1 - i })$.  To see this last equation, we must show that 
$1 + b_i = b_{q + 1 - i }$ where again these subscripts are reduced modulo $q$.  Since 
\[
a_i + a_{q + 1 - i } = i + m_i ( q + 1) + (q + 1 - i ) + m_{q + 1 - i }( q + 1) = ( 1 + m_i + m_{q+1 - i } )(q + 1),
\]
we have that $( \theta + b_i )( \theta + b_{q + 1 - i } ) \in \mathbb{F}_q$ and so $1 + b_i + b_{q + 1 - i } = 0$.  
The neighbors of $z_i$ in $G_3$ are $y$, $z_{ q+1 - i}$, and all vertices in the set 
$H + i$ which is equal to $\{ a_i + t ( q _ 1) : 0 \leq t \leq q - 2 \}$.  

The only vertices that remain are $z_0$ and $y$ but we leave these easy cases to the reader.

\medskip

In either case, $\phi$ is an isomorphism from $G_3$ to $ER_q$ and $G_{q , \theta}$ is isomorphic to an induced subgraph of $ER_q$.
\end{proof}


\section{Proof of Theorem 1.3}

In this section $q \geq 4$ is a prime power, $\theta$ is a generator of $\mathbb{F}_{q^2}^*$, and 
$A = A(q, \theta)$ is a Bose-Chowla Sidon set.  Write 
\[
a_1 = 1 + m_1 (q+1) , a_2 = 2 + m_2 (q+1) , \dots , a_q = q + m_q (q+1)
\]
where $\{a_1 , \dots , a_q \} = A$ and $m_i \in \{0,1, \dots , q \}$ for each $i$.  
Let $1 \leq i < j < k \leq q$ be distinct integers with $i + j - k = 0$.  Let $s$ and $t$ be integers with 
$0 \leq s \neq t \leq q  -2$ and 
\[
s + t \equiv - ( m_i + m_j - m_k) ( \textup{mod}~q-1).
\]
We have 
\begin{eqnarray*}
a_i + a_j - a_k & \equiv & i + j - k + (m_i + m_j - m_k)(q +1) ( \textup{mod}~q^2 - 1 ) \\
& \equiv & -(s + t)(q + 1) ( \textup{mod}~q^2 - 1).
\end{eqnarray*}
This implies 
\begin{equation}\label{eq:new eq3}
a_i + a_j + (s + t)(q + 1) \in A.
\end{equation}
Define the following six vertices in $G_{q, \theta}$:
\begin{center}
$u_1 = a_i + s ( q+ 1)$, ~~~ $v_1 = a_i + t ( q + 1)$, ~~~ $w_1 = ( q - 1 - s)(q + 1)$

$u_2 = a_j + s( q + 1)$, ~~~ $v_2 = a_j + t ( q + 1)$, ~~~ $w_2 = (q - 1 - t)(q + 1)$.
\end{center}
Once can check that all of these vertices are distinct using the fact that $i \neq j$, $0 \leq s \neq t \leq q  - 2$, 
$A \cap H = \emptyset$, and $(A - A) \cap H  = \{ 0 \}$ (recall $H = \{0,q + 1 , \dots , (q - 2)(q + 1) \}$.  
By (\ref{eq:new eq3}), $u_1$ is adjacent to $v_2$ and $u_2$ is adjacent to $v_1$.  
Since $u_1 + w_1$, $u_2 + w_2$, $v_1 + w_1$, and $v_2 + w_2$ are all in $A$, we have that 
$u_1 w_1 u_2 v_1 w_2 v_2$ is a 6-cycle in $G_{q, \theta}$.  

Now we use the vertices $z_0 , z_1 , \dots , z_q$, and $y$ that are added to $G_{q, \theta}$ to obtain $ER_q$ (see 
\textit{Steps} 1-3 in the proof of Theorem \ref{th:induced subgraph}).  In $ER_q$, vertex $z_0$ is adjacent to both $w_1$ and $w_2$, vertex $z_i $ is adjacent to both $u_1$ and $v_1$, and vertex $z_{j}$ is adjacent to both $u_2$ and $v_2$.  The final vertex that we need is $y$ which is adjacent to $z_0$, $z_{i }$, and  $z_{j}$.  This shows that the set of vertices 
\[
\{ u_1 , u_2 , v_1 , v_2 , w_1 , w_2 , z_0 , z_i , z_j , y \}
\]
contain a Petersen graph in $ER_q$.


\section{Concluding Remarks}

In \cite{mw}, Mubayi and Williford investigated the independence number of $ER_q$.  An open problem mentioned in
\cite{mw} is to construct an independent set $I$ in $ER_q$ for any $q$ that is not an even power of 2 with
$|I| = q^{3/2} + O(q)$, or show that no such set exists.
Our main result, Theorem \ref{th:induced subgraph} shows that $G_{q, \theta}$ is an induced subgraph of $ER_q$.  These two graphs differ by only $q + 2$ vertices and so the independence number of $G_{q, \theta}$ differs from the independence number of $ER_q$ by at most $q+2$.  Using the definition of adjacency, we find that an independent set $I$ in $G_{q, \theta}$ is a set $I \subset \mathbb{Z}_{q^2 - 1}$ such that $x + y \notin A(q, \theta)$ for all $x \neq y$, $x,y \in I$; that is
\[
\alpha (G_{q, \theta}) = \max_{ I \subset \mathbb{Z}_{q^2 - 1} }
\{ |I| : (  (I + I ) \backslash (2 \cdot I) ) \cap A(q, \theta ) = \emptyset \}
\]

By computing the second eigenvalue of $ER_q$, one can show that $\alpha (ER_q) \leq q^{3/2} + O(q)$.  Improvements in the error term have been made (see \cite{mw, gn}) but all of these improvements involve eigenvalues in some way.  It was pointed out to the authors by Javier Cilleruelo \cite{jc personal} that one can use the main result of \cite{jc} to show that
\begin{equation}\label{eq:independence number}
\alpha (G_{q, \theta}) \leq q^{3/2} + O(q).
\end{equation}
One does not need to compute any of the eigenvalues of $G_{q, \theta}$ to obtain (\ref{eq:independence number}) in this manner.  This gives a new proof of the estimate $\alpha (ER_q) \leq q^{3/2} + O(q)$ that does not use eigenvalues.

Determining the maximum size of an independent set in $ER_q$ is equivalent to the following question, which may be of independent interest:

\begin{problem}
What is the maximum number of pairwise non-orthogonal $1$-dimensional subspaces of $\mathbb{F}_q^3$?
\end{problem}

Another open problem from \cite{mw} is to find an induced subgraph of $ER_q$, $q$ a power of 2, that is triangle free and has at least $\frac{q^2}{2} + O(q^{3/2})$ vertices.  Again, since this problem concerns induced subgraphs, finding such a subgraph in $G_{q, \theta}$ would suffice to solve this problem in $ER_q$.  Thus, it is of interest to find the largest set
$J \subset \mathbb{Z}_{q^2 - 1}$ such that for any $x,y,z \in J$ with $x,y,z$ all distinct, at least one of the sums
$x+ y$, $x  +z$, or $y + z$ is not contained in $A(q,\theta )$.

\section{Acknowledgment}

The authors would like to thank Jason Williford for finding an important error in an earlier version of this paper.  



\begin{thebibliography}{10}

\bibitem{abl}
M.\ Abreu, C.\ Balbuena, D.\ Labbate,
{\em Adjacency matrices of polarity graphs and other $C_4$-free graphs of large size},
Des.\ Codes Cryptogr.\ (2010), 55, 221-233.

\bibitem{bc}
R.\ C.\ Bose, S.\ Chowla, {\em Theorems in the additive theory of numbers},
Comment.\ Math.\ Helv.\ {\bf 37} (1962), 141-147.

\bibitem{b}
W.\ G.\ Brown,
{\em On graphs that do not contain a Thomsen graph},
Canada Math.\ Bull.\ {\bf 9} (1966), 281-289.

\bibitem{chung}
F.\ R.\ K.\ Chung,
{\em Diameters and Eigenvalues},
J.\ American Mathematical Society, {\bf 2}, 187-196 (1989).

\bibitem{jc}
J.\ Cilleruelo,
{\em Combinatorial problems in finite fields and Sidon sets},
Combinatorica {\bf 32} 5 (2012), 497-511.

\bibitem{jc personal}
J.\ Cilleruelo,
Personal Communication, 2014.

\bibitem{c}
C.\ R.\ J.\ Clapham, A.\ Flockhart, J.\ Sheehan,
{\em Graphs without four-cycles},
J.\ Graph Theory 13 (1989), no.\ 1, 29-47.

\bibitem{coxeter}
H.\ S.\ M.\ Coxeter,
{My graph},
Proc.\ London Math.\ Soc.\ (3) 46 (1983), no.\ 1, 117-136.

\bibitem{ers}
P.\ Erd\H{o}s, A.\ R\'{e}nyi, V.\ T.\ S\'{o}s,
{\em On a problem of graph theory},
Studia Sci.\ Math.\ Hungar.\ 1 (1966), 215-235.

\bibitem{fknw}
F.\ Firke, P.\ Kosek, E.\ Nash, J.\ Williford,
{\em Extremal graphs without 4-cycles}, J.\ Combinatorial Theory, Series B {\bf 103}, 327-336 (2013).

\bibitem{fu1983}
Z.\ F\"{u}redi,
{\em Graphs without quadrilaterals},
J.\ Combinatorial Theory, Series B {\bf 34}, 187-190 (1983).

\bibitem{fu1994}
Z.\ F\"{u}redi,
{\em Quadrilateral-free graphs with maximum number of edges},
Proceedings of the Japan Workshop on Graph Theory and Combinatorics,
Keio University, Yokohama, Japan 1994, 13-22.

\bibitem{fu1996}
Z.\ F\"{u}redi,
{\em On the number of edges of quadrilateral-free graphs},
J.\ Combinatorial Theory, Series B {\bf 68}, 1-6 (1996).

\bibitem{gn}
C.\ Godsil, M.\ W.\ Newman,
{\em Eigenvalue bounds for independent sets},
J.\ Combinatorial Theory, Series B,
{\bf 98} 721-734 (2008).

\bibitem{kst}
T.\ K\H{o}v\'{a}ri, V.\ T.\ S\'{o}s, P.\ Tur\'{a}n,
{\em On a problem of Zarankiewicz},
Colloq. Math. {\bf 3} (1954), p. 50-57.

\bibitem{ln}
R.\ Lidl, H.\ Niederreiter,
{\em Finite Fields}, Cambridge University Press, 1997.

\bibitem{l}
B.\ Lindstr\"{o}m, {\em A translate of Bose-Chowla $B_2$-sets},
Studia Scientiarum Mathematicarum Hungarica 36 (2000), 331-333.

\bibitem{l2}
B.\ Lindstr\"{o}m, {\em Well distribution of Sidon sets in residue classes},
Journal of Number Theory 69 (2), 197-200 (1998).

\bibitem{moreno}
O.\ Moreno,
{\em On the existence of a primitive quadratic of trace 1 over $GF (p^m )$},
J.\ Combinatorial Theory, Series A {\bf 51}, 104-110 (1989).

\bibitem{mw}
D.\ Mubayi, J.\ Williford,
{\em On the independence number of the Erd\H{o}s-R\'{e}nyi and Projective Norm Graphs and a related hypergraph},
J.\ Graph Theory 56 (2007), no.\ 2, 113-127.

\bibitem{reiman}
I.\ Reiman,
{\em \"{U}ber ein Problem von K.\ Zarankiewicz},
(German) Acta.\ Math.\ Acad.\ Sci.\ Hungar.\ {\bf 9} (1958), 269-273.

\bibitem{tt}
M.\ Tait, C.\ Timmons,
{\em Sidon sets and graphs without 4-cycles},
to appear in J.\ Combinatorics.

\bibitem{v}
L.\ A.\ Vinh,
{\em Graphs generated by Sidon sets and algebraic equations over finite fields},
J.\ Combinatorial Theory, Series B {\bf 103}, 651-657 (2013).



\end{thebibliography}
\end{document}